\documentclass[11pt, leqno]{article}

\usepackage{mathrsfs,amssymb,amsmath,amsthm,amsfonts, color, mathscinet}
\usepackage[dvips]{graphicx}

\makeatletter
	
	\@addtoreset{equation}{section}
\makeatother

\setlength{\topmargin}{-1.5cm}
\setlength{\oddsidemargin}{0.5cm}
\setlength{\evensidemargin}{0cm}
\setlength{\textheight}{21.7cm}
\setlength{\textwidth}{15cm}
\setlength{\footskip}{1.5cm}

\begin{document}

\renewcommand{\theenumi}{\rm (\roman{enumi})}
\renewcommand{\labelenumi}{\rm \theenumi}

\newtheorem{thm}{Theorem}[section]
\newtheorem{defi}[thm]{Definition}
\newtheorem{lem}[thm]{Lemma}
\newtheorem{prop}[thm]{Proposition}
\newtheorem{cor}[thm]{Corollary}
\newtheorem{exam}[thm]{Example}
\newtheorem{conj}[thm]{Conjecture}
\newtheorem{rem}[thm]{Remark}
\allowdisplaybreaks

\title{Remarks on Stochastic Systems I:\\ Markov properties, local and global uniqueness, and limits of stochastic equations}

\author{Seiichiro Kusuoka
\vspace{5mm}\\
\small $^*$Department of Mathematics, Graduate School of Science, Kyoto University,\\
\small Kitashirakawa-Oiwakecho, Sakyo-ku, Kyoto 606-8502, Japan\\
\small e-mail address: {kusuoka@math.kyoto-u.ac.jp}}
\maketitle

\begin{abstract}
In the present paper, we give some examples of stochastic differential equations which have delicateness in the Markov and strong Markov properties, the uniqueness locally in time and globally in time, and initial conditions.
Moreover, we show that such stochastic differential equations appear in the limits of stochastic differential equations which have the existence and pathwise uniqueness of solutions.
These examples are constructed in motivation to singular stochastic partial differential equations.
We also give some examples of shifted equations whose sum of the solutions depends on the choices of the decomposition of the initial condition of the original equation.
\end{abstract}

{\bf AMS Classification Numbers:} 60J60, 60H17, 60H10, 58J65

 \vskip0.2cm

{\bf Key words:} stochastic differential equations, singular stochastic partial differential equations, Markov property, uniqueness of solutions

\section{Introduction}\label{sec:intro}

After stochastic differential equations (SDEs) were introduced by Kiyoshi It{\^o}, SDEs had been studied very much as the mainstream in the probability theory.
The relations to other fields of mathematics also had been discovered, and the framework of SDEs are almost fixed now except some new types of SDEs (see \cite{FlRuWo1, FlRuWo2, FlIsRu} for an extension of the framework of SDEs, and \cite{GuPe} for a review of the history from the extension of SDEs to singular SPDEs).
As partial differential equations (PDEs) including some noises, stochastic partial differential equations (SPDEs) also have been studied intensively, not only in view of the motivations of mathematics, but also of the motivations in physics.
We remark that SPDEs can be regarded as the infinite-dimensional version of SDEs.

Recently singular SPDEs are studied intensively.
The key methods for the study are regularity structures (see \cite{Ha1, Ha2}) and paracontrolled calculus (see \cite{GIP}).
The methods enable us to treat SPDEs which require renomalization to be solved in suitable sense.
Such equations appear when we consider stochastic quantization equations of quantum field theories (see the introductions of \cite{AlKu1, AlKu2} and \cite{GuHo2} for the history of stochastic quantizations).
Because of explicit applications, now singular SPDEs are studied by many mathematicians and mathematical physicists from both the mathematical and the physical points of view.

However, the method of singular SPDEs is very different from the standard theory of SDEs.
Indeed, to solve singular SPDEs we first consider the linearized SPDE associated to the original singular SPDE.
The linearized SPDE is often of the infinite-dimensional Ornstein-Uhlenbeck type, and hence the solution is often the Ornstein-Uhlenbeck process.
Second, we consider a transform of the singular SPDE to another SPDE which the difference between the solutions to the original singular SPDE and the linearized SPDE is expected to satisfy.
We call the equation which the difference of the solutions satisfies {\it a shifted equation}.
For the shifted equation to be well-posed we often need renormalization, which cancels the diverging terms coming from the nonlinearity in a suitable sense.
Then, we give a solution to the original singular SPDE in a suitable sense by the sum of the solutions to the shifted equation and the linearized equation.
This method is originally introduced in \cite{DPDe}.
Here, we remark that the definition of the solution to singular SPDEs are different from those to usual SPDEs.
There is another way to understand the solution to singular SPDEs.
Before taking the difference between the solutions to the original singular SPDE and the linearized SPDE, by regularizing the noise (or the nonlinear term) of singular SPDEs we can define the solution to the regularized singular SPDE in usual sense of solutions to SPDEs.
The solutions to the shifted equations of the regularized singular SPDE are obtained, and we can show the convergence of the solutions as removing regularization if the shifted equation is solved without regularization.
Then, the sum of the solutions to the shifted equations and the linearized SPDE is also convergent in a suitable sense.
Hence, the solution to the singular SPDE can be regarded as the limit of the solutions to the regularized equations.
Here, we remark that the limits are known to be independent of regularizations in suitable classes (see \cite{Ha1} and \cite{GIP}).
See Section \ref{sec:SSPDE} for more details of singular SPDEs.

The points that we have to be careful in the methods of singular SPDEs are the following.
\begin{itemize}
\item The solutions to singular SPDEs are defined by limits of solutions to approximate equations, or defined by a projection of solutions to shifted equations.

\item There exist exceptional sets of initial points of original equations and shifted equations.
\end{itemize}
These do not appear in the standard theory of SDEs, and seem to make delicate problems differently from normal SDEs.
See Section \ref{sec:Preliminary} for the details of the methods of singular SPDEs and the standard theory of SDEs.

From these points of view, in the present paper we consider examples under the framework of SDEs which make phenomena different from the cases of sufficiently nice SDEs.
Precisely, we focus on the importance of the uniqueness in all initial conditions and the delicateness of taking limits of strong Markov processes.

Now we summarize the results in the present paper as the following theorem.

\begin{thm}\label{thm:main}
\begin{enumerate}
\item There exists a sequence of SDEs which have the pathwise uniqueness, such that their solutions have the strong Markov property and converge to a stochastic process which does not have the strong Markov property. (See Theorem \ref{thm:refl}.)

\item There exists a SDE which has the uniqueness locally in time for every initial point except one point, whose any solution is extendable globally in time.
However, the SDE does not have the uniqueness globally in time. (See Theorem \ref{thm:nuSDE}.)

\item There exist sequences of SDEs which have the pathwise uniqueness for every initial point, such that the solutions to the both sequences of SDEs converges to solutions to a SDE which does not have the uniqueness of solutions.
Moreover, the limits of the solutions are different from each other. (See Theorem \ref{thm:nuSDEapprox}.)

\item There exists a path-dependent SDE which has the uniqueness locally in time for every initial point except one point, whose any solution is extendable globally in time.
However, the SDE does not have the uniqueness globally in time. (See Theorem \ref{thm:nuSDEpdep}.)

\item There exists a sequence of SDEs which have the pathwise uniqueness for every initial point, such that the solutions to the both sequences of SDEs converges to solutions to a SDE which does not have the uniqueness of solutions.
Moreover, the limit of the sequence of SDEs can be regarded a shifted equation.
However, the limit of the solutions depends on the decomposition of initial conditions for the shifted equation. (See Theorem \ref{thm:shift3approx}.)
\end{enumerate}
\end{thm}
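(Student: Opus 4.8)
The plan is to read Theorem~\ref{thm:main} as a \emph{summary}: its five assertions are established, respectively, by the explicit constructions of Theorems~\ref{thm:refl}, \ref{thm:nuSDE}, \ref{thm:nuSDEapprox}, \ref{thm:nuSDEpdep} and \ref{thm:shift3approx}, so the substance lies in building five (families of) one-dimensional, possibly path-dependent, It\^o equations. The unifying device I would use is a coefficient that is regular enough to force pathwise uniqueness away from a single exceptional point $x_\ast$ (take $x_\ast=0$) while degenerating there in a non-Lipschitz, power-type fashion, so that uniqueness or the (strong) Markov property can break \emph{only} at $x_\ast$. The common engine is the Yamada--Watanabe criterion: a diffusion coefficient behaving like $|x|^{\alpha}$ with $\alpha\ge 1/2$ yields local pathwise uniqueness, whereas $\alpha<1/2$, or a matching degenerate drift of Peano type $\dot x\sim|x|^{\alpha}$ with $0<\alpha<1$, opens the door to several solutions emanating from $x_\ast$.

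For assertion (i) I would realize the limit through a Dynkin--Kinney type mechanism. I would take approximating SDEs $dX^{(n)}=\sigma_n(X^{(n)})\,dB$ (with reflection, as Theorem~\ref{thm:refl} suggests) having smooth, uniformly elliptic coefficients, hence pathwise uniqueness and the strong Markov property, and arrange $\sigma_n$ so that the associated speed measures develop an atom at $x_\ast$ in the limit. Then a trajectory \emph{started} at $x_\ast$ is trapped there, while a trajectory started elsewhere passes \emph{through} $x_\ast$; evaluating at the hitting time of $x_\ast$ contradicts the strong Markov property, although the limit remains (simply) Markov. Convergence I would obtain by tightness plus identification of the limiting martingale problem or resolvent, and the failure of the strong Markov property by exhibiting the hitting-time discrepancy explicitly.

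Assertions (ii)--(v) all exploit branching of solutions at $x_\ast$. For (ii) and (iv) I would dress a Peano-type equation with noise so that from every starting point except $x_\ast$ the solution is locally unique and extends globally in time, whereas from $x_\ast$ (for (iv), for every path whose past reaches $x_\ast$) one may either remain or leave, destroying global uniqueness; the path-dependent version simply replaces the pointwise condition at $x_\ast$ by a condition on the history of the trajectory. For (iii) I would approximate the non-unique limit equation by two sequences that perturb the coefficient from opposite sides (for example $\pm\varepsilon$ shifts or two mollifications selecting opposite branches), each enjoying pathwise uniqueness, and use stability estimates to show the two sequences converge to two \emph{distinct} solutions of the limit. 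Assertion (v) is (iii) recast in the shifted-equation language of Section~\ref{sec:SSPDE}: writing the datum as $u_0=X_0+v_0$ and the solution as $u=X+v$ with $X$ the linearized (Ornstein--Uhlenbeck--type) part, I would show that two admissible decompositions of $u_0$ trigger different branches at $x_\ast$, so that the reconstructed sum $X+v$ genuinely depends on the decomposition.

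In every case the main obstacle is the identification of the limit: one must prove convergence of the approximations \emph{and} pin down the limit precisely enough to certify the pathology. For (i) the delicate point is showing the limit is Markov but not strong Markov rather than merely hard to describe, which forces a careful analysis of the limiting speed and scale data together with hitting times. For (iii) and (v) the crux is a quantitative separation guaranteeing that the two approximating families converge to genuinely \emph{different} limits and that this separation survives the removal of the regularization; for (v) one must in addition make the dependence on the decomposition explicit and non-artificial, which is the subtlest part, since it is the precise analogue of the exceptional-initial-point phenomenon flagged in the introduction for singular SPDEs.
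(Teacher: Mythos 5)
Your reading of Theorem \ref{thm:main} as a summary of five constructions is correct, and your proposals for (ii)--(v) are at least in the right spirit; but your mechanism for assertion (i) has a genuine flaw. You propose driftless, uniformly elliptic approximations $dX^{(n)}=\sigma_n(X^{(n)})\,dB$ whose speed measures ``develop an atom at $x_\ast$,'' so that the limit is trapped at $x_\ast$ when started there but passes through when started elsewhere. This cannot work. An atom (finite or infinite) in the limiting speed measure produces sticky or absorbed Brownian motion, both of which \emph{are} strong Markov. Worse, the two behaviours you want are mutually exclusive for limits of natural-scale diffusions: trapping at $x_\ast$ forces $m_n((x_\ast-\delta,x_\ast+\delta))\rightarrow\infty$ for every $\delta>0$ (the expected exit time $\int(\delta-|y|)\,m_n(dy)$ must diverge), and then, by the strong Markov property of each $X^{(n)}$ applied at the hitting time of $x_\ast$, a trajectory started at $x\neq x_\ast$ also gets stuck upon reaching $x_\ast$; the limit family is then absorbed Brownian motion from \emph{every} starting point, which is strong Markov. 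The paper's Theorem \ref{thm:refl} uses a fundamentally different mechanism that cannot be encoded in scale/speed data at all: a penalizing drift $-\varphi_n'(X_t^n)\,dt$ in \eqref{eq:SDEnrefl} that repels from $0$ symmetrically, so that the limit hits $0$ with positive probability, never crosses it, and spends Lebesgue-null time there. The failure of the strong Markov property then comes from \emph{side-memory} (after hitting $0$ the process continues on the side it came from), exactly as in the circle example of Proposition \ref{prop:nonMarkov2} --- not from trapping. A proof of (i) along your lines would have to be abandoned and replaced by something of this reflection/penalization type; the scale-function computation $s_n(x)=\int_l^x\exp(-\int_l^y 2(b-\varphi_n')\sigma^{-2})\,dy$ showing no crossing in the limit is the actual core of the paper's argument.

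For (ii)--(v) your plan is workable but incomplete in two respects. First, ``dressing a Peano-type equation with noise'' is dangerous: nondegenerate additive noise with a merely bounded measurable drift \emph{restores} pathwise uniqueness (Zvonkin/Veretennikov regularization by noise), so the noise itself must degenerate at $x_\ast$; your alternative $|x|^{\alpha}$-diffusion with $\alpha<1/2$ does satisfy this and is a legitimate substitute for the paper's simpler choice, the indicator SDE \eqref{eq:nuSDE}, whose two explicit branches are $\xi+B_t$ and $\xi+B_{t\wedge\tau_0}$. Second, for (iii) and (v) the entire content lies in \emph{which} approximation selects \emph{which} branch, and your ``$\pm\varepsilon$ shifts or two mollifications'' does not identify this. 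The paper's selections are concrete and asymmetric in kind: a coefficient vanishing continuously at $0$ (so Yamada--Watanabe gives pathwise uniqueness and the solution sticks at $0$, yielding the absorbed branch in the limit), versus an added independent noise $\varepsilon\,d\widetilde{B}_t$ (so an occupation-time/It\^o-isometry argument shows the solution spends no time at $0$, yielding the pass-through branch). Without exhibiting two such structurally different regularizations and proving the $L^2(\Omega;C([0,\infty)))$ convergence of each, the ``quantitative separation'' you invoke for (iii), and hence the decomposition-dependence in (v) (where the paper simply routes the two initial decompositions $(0,x)$ and $(2,x-2)$ through these two mechanisms in Theorem \ref{thm:shift3approx}), remains unproved.
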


In view of the examples in Theorem \ref{thm:main} we see the difference of the frameworks between singular SPDEs and the standard theory of SDEs.
Moreover, arguments in some papers in the field of singular SPDEs seem loose.
We dare not to refer them here in the present paper.
It should be emphasized that the results in the present paper do not conflict with known results in singular SPDEs, but suggest careful arguments to apply known facts in conventional stochastic analysis.
Here, we also remark that in the case of singular SPDEs on the Euclidean spaces, we need to take limits not only for regularization, but also for infinite volumes (see \cite{MW2} for the $\Phi^4$-stochastic quantization on ${\mathbb R}^2$ and \cite{AlKu2, GuHo1, GuHo2} for that on ${\mathbb R}^3$). Hence, also with respect to the limits for the infinite volumes, we have to pay attentions in view of such examples in Theorem \ref{thm:main}.

On the other hand, some papers are written very carefully.
In the first memorable paper \cite{Ha1} of the theory of regularity structures, the uniqueness of the solution is discussed in the framework of the regularity structures (see \cite[Theorem 7.8]{Ha1}), while in applications to singular SPDEs the author did not mention the uniqueness of the solution, but mentioned only the independence of regularizations in a suitable class is mentioned (see \cite[Section 1.5]{Ha1}).
Also in the first memorable paper \cite{GIP} of paracontrolled calculus, the dependence of inputs and the sense of the unique solutions are clarified (see \cite[Theorem 3.3, Theorem 4.1, Corollary 5.9]{GIP}).
By the way, the continuities in the initial conditions in suitable senses are obtained in both theories (see \cite{Ha1} and \cite{GIP}), and from the continuities we may prove that many singular SPDEs do not make problems in Theorem \ref{thm:main}.

Finally we remark that in the cases of the $\Phi ^4_2$-stochastic quantization and the $\exp (\Phi)_2$-stochastic quantization, the stochastic quantization equations are explicitly written by means of Wick products.
This means that the singular SPDEs are well-defined without taking limits.
Moreover, we are able to construct the associated Dirichlet forms, which generate the strong Markov processes.
See \cite{AlRo, RZZ} for the case of the $\Phi ^4_2$-model and \cite{HKK1, HKK2} for the case of the $\exp (\Phi)_2$-model.
In view of these facts, the author in the present paper guesses that singular SPDEs which have sufficiently low singularity of noises do not make delicate problems.
It also should be remarked that even these cases exceptional sets appear, because Wick products are defined almost everywhere with respect to the free field measure, and the Markov processes constructed by Dirichlet forms are defined only for quasi-every initial points with respect to the reference measure.

The construction of the present paper is as follows.
In Section \ref{sec:Preliminary} we recall the method of singular SPDEs via the $\Phi ^4$-stochastic quantization model, and the standard theory of SDEs.
Precisely, in Section \ref{sec:SSPDE} we give a short review of the methods of singular SPDEs to clarify the techniques to construct solutions to singular SPDEs.
And in Section \ref{sec:PreSDE} we recall the standard theory of SDEs to clarify the differences between them.
In Section \ref{sec:Markov} we see that for the strong Markov property, the transition probability should be defined at every initial points in the state space.
Precisely, we give an example of a stochastic process that the process is Markov, but not strong Markov, because of a problematic point in the state space.
Moreover, we show that such a situation occurs even in the limit of the strong Markov processes which are the solutions to pathwise-unique SDEs.
This implies that we need to be careful in the strong Markov property of the limit processes.
In Section \ref{sec:nuSDE} we give an example of a SDE so that the solutions have the uniqueness locally in time for every initial points except one point, any solution is extendable to one globally in time, but the SDE does not have the uniqueness globally in time.
Moreover, we also give examples that such a situation can occur in the case of a limit of SDEs which have the pathwise uniqueness for all initial points, and also in the case of a path-dependent SDE with the pathwise uniqueness locally in time for all initial points. 
In Section \ref{sec:shift} we see a delicateness for the implication from the uniqueness of shifted equations to the uniqueness of the original equation.
In particular, we give the dependence of the choice of initial conditions of shifted equations on the solutions to the original equation.

\vspace{5mm}\noindent
{\bf Acknowledgements.}
The author is grateful to Emeritus Professors Yozo Tamura and Tokuzo Shiga for giving me many knowledges and techniques while the author was a student.
Indeed, the proofs in the present paper are based on the knowledges and the techniques given by them many years ago.
This work was partially supported by JSPS KAKENHI Grant Numbers 21H00988.

\vspace{5mm}\noindent
{\bf Notation.}
Denote $a\wedge b := \min \{ a,b \}$ and $a\vee b := \max \{ a,b \}$ for $a,b \in {\mathbb R}$.
We define the topology of the space $C([0,\infty )) = C([0,\infty ); {\mathbb R})$ by the uniform convergence on each compact set.
We denote by $P_x$ the law of some Markov processes with respect to initial point $x$, and by $E_x$ the expectation with respect to $P_x$.
Let $C_0({\mathbb R})$ be the total set of continuous functions on ${\mathbb R}$ which have compact supports.

\section{Preliminary}\label{sec:Preliminary}

\subsection{The technique in singular SPDEs}\label{sec:SSPDE}

Here, we see the motivation of the present paper by a short review of the technique in singular SPDEs.
As an explicit example, consider the stochastic quantization equation of the $\Phi ^4$-quantum field model 
\begin{equation}\label{eq:Phi4SQ1}
\partial _t \Phi (t,x) = (\triangle -1) \Phi (t,x) - \Phi(t,x)^3 + \dot{W}(t,x) , \quad (t,x)\in [0,\infty )\times {\mathbb T}^d
\end{equation}
where $\dot W(t,x)$ is the time-space white noise (often regarded as weak derivative in time $t$ of the cylindrical Brownian motion $W(t,x)$ on $L^2({\mathbb T}^d)$).
In the case that the dimension $d=1$, the singularity of $\dot W$ is not high and \eqref{eq:Phi4SQ1} is solved by usual methods in SPDEs.
However, when $d \geq 2$, the singularity of $\dot W$ is too high to solve \eqref{eq:Phi4SQ1} without renormalization.
The problem is on the nonlinear term $\Phi(t,x)^3$, which is not well-defined as it is, if $\Phi$ is not a function, but a distribution.
Indeed, \eqref{eq:Phi4SQ1} is ill-defined in the sense that the expected regularity of $\Phi$ is too low, precisely $\Phi$ is expected to be a distribution, and $\Phi ^3$ is not well-defined as it is.

In the case that $d=2$, \eqref{eq:Phi4SQ1} is modified by renormalization as
\begin{equation}\label{eq:Phi4SQ2}
\partial _t \Phi (t,x) = (\triangle -1) \Phi (t,x) - :\Phi(t,x)^3: + \dot{W}(t,x) , \quad (t,x)\in [0,\infty )\times {\mathbb T}^d
\end{equation}
where $:\phi ^n:$ is the wick product with respect to the free field measure for $n\in {\mathbb N}$.
Here, we note that the map $\phi \mapsto :\phi ^n:$ is well-defined for almost every $\phi$ with respect to the free field measure.
A direct approach by methods of SPDEs to \eqref{eq:Phi4SQ2} was first introduced in \cite{DPDe}.
Now we see the strategy of the approach, because it is closely related to the motivation of the present paper.
Consider the stationary solution to the linearized equation of \eqref{eq:Phi4SQ2}:
\begin{equation}\label{eq:Phi4OU}
\partial _t Z(t,x) = (\triangle -1) Z(t,x) + \dot{W}(t,x), \quad (t,x)\in [0,\infty )\times {\mathbb T}^d,
\end{equation}
which is given by
\[
Z(t) := \int _{-\infty}^t e^{(t-s)(\triangle -1)} dW(s), \quad t\in [0,\infty ),
\]
where $(W(t); t\in (-\infty ,\infty ))$ is an extension in the time parameter $t$ of the cylindrical Brownian motion $(W(t); t\in [0,\infty ))$ which satisfies $\partial _t W(t) = \dot{W}(t,\cdot)$.
If $\Phi$ and $Z$ are solutions to \eqref{eq:Phi4SQ2} and \eqref{eq:Phi4OU} in a suitable sense, respectively, then $Y:= \Phi - Z$ satisfies
\begin{equation}\label{eq:Phi4Shifted}\begin{array}{rl}
\partial _t Y(t,x) &= (\triangle -1) Y(t,x) - Y(t,x)^3 -3 Z(t,x) Y(t,x)^2 \\
&\hspace{2cm} -3 :Z(t,x)^2: Y(t,x) - :Z(t,x)^3:, \quad (t,x)\in [0,\infty )\times {\mathbb T}^2.
\end{array}\end{equation}
We often call \eqref{eq:Phi4Shifted} a shifted equation.
Since the coefficients $Z(t,x)$, $:Z(t,x)^2:$ and $:Z(t,x)^3:$ of this equations are explicit and have better regularities than the original noise $\dot{W}(t,x)$, \eqref{eq:Phi4Shifted} is solved in the sense of mild solutions.
Hence, by letting $\Phi := Y+Z$ we have a solution to \eqref{eq:Phi4SQ2} (recall that $Z$ is an explicit stochastic process).
This is the strategy of the argument in \cite{DPDe}.
Here, it should be remarked that $\Phi := Y+Z$ is the definition of the solution to \eqref{eq:Phi4SQ2} (see \cite[Definition 4.1]{DPDe}), and the renormalized equation \eqref{eq:Phi4SQ2} is not solved directly.
Moreover, there is ambiguity on the initial condition $\Phi (0,x) = Y(0,x)+Z(0,x)$, because $Z$ is a stationary solution to \eqref{eq:Phi4OU} and has exceptional sets with respect to the probability measure.
Furthermore, for given initial condition $\Phi (0,x)$ there exist infinitely many choices to decompose it into $Y(0,x)$ and $Z(0,x)$ by $\Phi (0,x) = Y(0,x)+Z(0,x)$, and the different decomposition can give different solutions $\Phi$.
These facts make difficulties to discuss the uniqueness of solutions.
The uniqueness is closely related to the strong Markov property of solutions (see Section \ref{sec:PreSDE} for a brief review of the standard theory of SDEs, and Section \ref{sec:Markov} for examples of non-strong Markov solutions).
The motivation of the present paper is to study the differences between the solutions obtained by this approach and by the standard theory of SDEs.
Precisely, we give some explicit examples which make different situations from the standard theory of SDEs.

It also should be remarked that the Dirichlet form associated to \eqref{eq:Phi4SQ1} had been constructed in \cite{AlRo} before \cite{DPDe} appeared.
The advantage of the method of the Dirichlet form is that we obtain the unique Hunt process associated to the bilinear form, which has the strong Markov property.
The solution to \eqref{eq:Phi4SQ2} is also constructed by \cite{AlRo}.
However, there also exists exceptional sets, which are polar sets of the Hunt process, and a similar situation occurs to the approach by \cite{DPDe}.
See \cite{FOT} for the theory of the Dirichlet forms.

Now we turn to the case that $d=3$.
In this case, the singularity of the noise $\dot{W}(t,x)$ is higher than the case that $d=2$, and solving \eqref{eq:Phi4SQ1} via renormalization had been an open problem for many years.
However, recently it became solvable in the sense of singular SPDEs by the regularity structure introduced in \cite{Ha1} and the paracontrolled calculus in \cite{GIP}.
Here, we remark that the regularity structure and the paracontrolled calculus are different theories, but share the key idea that similarly to \cite{DPDe} we decompose the formal solution $\Phi$ by grades of regularities (singularities), and transform the original SPDE \eqref{eq:Phi4SQ1} into solvable one.
Indeed, the method in \cite{DPDe} can be regarded as the most simple case of singular SPDEs.
In the regularity structure the pair of the decomposed terms of $\Phi$ is regarded as an element the model space, the solution $\Phi$ to the renormalized version of \eqref{eq:Phi4SQ1} is constructed by the projection of the solution in the model space to the regularity structure associated to \eqref{eq:Phi4SQ1}.
See \cite{Ha2} for a survey of the approach by the regularity structure to the $\Phi ^4_3$-stochastic quantization equation, and \cite{BCCH} for the regularity structure approach to other singular SPDEs.

In the paracontrolled calculus, the formal solution $\Phi$ is decomposed by the paracontrolled ansatz, which is also a decomposition by the regularities (singularities).
For \eqref{eq:Phi4SQ1} the paracontrolled ansatz is given by
\begin{equation}\label{eq:Phi4SQ3sol}
\Phi = Z + I(Z^3) - 3 \int _{-\infty}^t e^{(t-s)(\triangle -1)} \left[ (\Psi _1 + \Psi _2 - I( :Z^3: )) \mbox{\textcircled{\scriptsize$<$}} :Z^2: \right] (s) ds + \Psi _2
\end{equation}
where $I( :Z^3: )(t) = \int _{-\infty}^t e^{(t-s)(\triangle -1)} :Z^3(s): ds$ and $\mbox{\textcircled{\scriptsize$<$}}$ is a notation of the paraproduct.
By solving nonlinear partial differential equation of $(\Phi _1 , \Phi _2)$:
\begin{equation}\label{eq:Phi4SQ3shift}\left\{ \begin{array}{rl}
\partial _t \Psi _1 (t)&= (\triangle -1) \Psi _1(t) - 3\left[ \Psi _1 + \Psi _2 - I( :Z^3: )) \mbox{\textcircled{\scriptsize$<$}} :Z^2: \right] (t) \\
\partial _t \Psi _2 (t)&= (\triangle -1) \Psi _2(t) + F(\Psi _1, \Psi _2, Z, :Z^2:, I( :Z^3: )) (t),
\end{array}\right.\end{equation}
which is obtained by a transformation of \eqref{eq:Phi4SQ1} together with renormalization, we can define a solution $\Phi$ by \eqref{eq:Phi4SQ3sol} to the renormalized version of \eqref{eq:Phi4SQ1}.
The term $F(\Psi _1, \Psi _2, Z, :Z^2:, I( :Z^3: ))$ is too complicated to write down explicitly, because it includes paraproducts, resonance, and their commutators.
But here, we remark that $F(\Psi _1, \Psi _2, Z, :Z^2:, I( :Z^3: ))(t)$ depends on the trajectory of $Z$ before the present time $t$, i.e. \eqref{eq:Phi4SQ3shift} is a partial differential equation of the path-dependent type.
Indeed, $F$ depends on $I( :Z^3: )$.
Hence, even if \eqref{eq:Phi4SQ3shift} has the uniqueness of the solution $(\Phi _1 , \Phi _2)$ for all initial conditions in a suitable state space, it is not trivial for $(\Phi _1 , \Phi _2)$ to have the Markov and strong Markov properties.
For the details of this argument, see \cite{CaCu}, \cite{MW3} or \cite{AlKu1}.
The definition of the solution $\Phi$ given by \eqref{eq:Phi4SQ3sol} is same as in the case that $d=2$ (see above or \cite[Definition 4.1]{DPDe} for the precise definition).
The renormalized equation of \eqref{eq:Phi4SQ1} for $d=3$ is written formally by
\begin{equation}\label{eq:Phi4SQ3}
\partial _t \Phi (t,x) = (\triangle -1) \Phi (t,x) - (\Phi(t,x)^3 - \infty \cdot \Phi (t,x)) + \dot{W}(t,x) , \quad (t,x)\in [0,\infty )\times {\mathbb T}^3.
\end{equation}
We remark that the term $(\Phi(t,x)^3 - \infty \cdot \Phi (t,x))$ cannot be written by Wick products, because we need further renormailzation than Wick products.
This means that we do not have explicit representation of the term, and it is jutisfied by limits, i.e. approximations by regularization.
It is also remarked that the renormalization constants are written explicitly, once we fix an approximation.
For the renormalization constants to be independent of $t$, we need a restriction on the initial condition $Z_0$ of $Z$, and hence there is anbiguity of the initial condition of $\Phi$ similar to the case that $d=2$.
We enphasize here that the target \eqref{eq:Phi4SQ3} is a formal equation, and the solution to \eqref{eq:Phi4SQ3} is defined by the solution to a path-dependent equation \eqref{eq:Phi4SQ3shift} via approximations.

Now shortly, we mention a few references of the approach by the paracontrolled calculus to the $\Phi ^4_3$-stochastic quantization equation.
The approach of the paracontrolled calculus to the $\Phi ^4_3$-stochastic quantization equation was first introduced in \cite{CaCu}.
The solution obtained in \cite{CaCu} is locally in time, and unique in the sense of the paracontrolled calculus.
In \cite{MW3} the extendability globally in time of the solutions to \eqref{eq:Phi4SQ3sol} was obtained for sufficiently regular initial conditions of $(\Psi _1, \Psi _2)$.
Here, we remark that the global exisntence and the local uniqueness of the solution imply neither the global uniquness nor the strong Markov property of solutions, as we will see in the following sections.
On the other hand, in \cite{AlKu1} the solution (in the sense of the paracontrolled calculus) globally in time for almost every initial point with respect to the $\Phi ^4_3$-measure was obtained by stationary approximation.
This approach by stationary approximation enables us to constuct a time-global solution for almost every initial points with respect to the $\Phi ^4_3$-measure which is constructed by the approach itself.

We remark that the regularity structure and the paracontrolled calculus are explained above independently, but the precise relation between the regularity structure and the paracontrolled calculus has been studied in \cite{BaHo1, BaHo2}.

What we would like to emphasize here is that the methods in singular SPDEs are very different from the standard theory of SDEs.
To clarify the difference we recall the standard theory of SDEs in Section \ref{sec:PreSDE}.

\subsection{Markov processes and SDEs}\label{sec:PreSDE}

In this section we recall the standard theories of Markov processes and SDEs.

First, we recall the key theorem to construct the strong Makov processes form Markov semigroups.
For a locally compact Hausdorff space $S$ with the second axiom of countability, let $C_{\infty}(S)$ be the total sets of continuous functions $f$ such that $f$ is extended to a continuous function $\tilde{f}$ on $S\cup \{ \Delta \}$ which is the one-point compactification of $S$, and that $\tilde{f}(\Delta ) =0$.

\begin{thm}[cf. \cite{BlGe}]\label{thm:BlGe}
Let $\{ T_t\}$ be a nonnegative contractive and semigroup on $C_\infty (S)$.
Assume that $\{ T_t\}$ satisfies the Feller property, i.e. $\{ T_t\}$ is a strong continuous semigroup on $C_\infty (S)$.
Then, there exists a Hunt process which has $\{ T_t\}$ as its transition semigroup.
\end{thm}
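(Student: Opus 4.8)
The plan is to construct the Hunt process by the classical route through resolvents and the Hille–Yosida / Ray–Knight machinery. This is a standard result (as the citation to Blumenthal–Getoor indicates), so the goal is to organize the construction rather than invent it. First I would pass from the semigroup $\{T_t\}$ to the associated resolvent $\{R_\lambda\}_{\lambda>0}$ defined by $R_\lambda f = \int_0^\infty e^{-\lambda t} T_t f\, dt$ for $f\in C_\infty(S)$. The strong continuity and the nonnegativity/contractivity of $\{T_t\}$ transfer to the resolvent family: each $R_\lambda$ is a nonnegative contraction on $C_\infty(S)$ with $\lambda R_\lambda$ contractive, the resolvent equation $R_\lambda - R_\mu = (\mu - \lambda) R_\lambda R_\mu$ holds, and $\lambda R_\lambda f \to f$ as $\lambda\to\infty$ by Feller continuity. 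This places us squarely in the hypotheses of a Ray-type resolvent, so that the abstract existence theorem for Feller processes applies.

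Next I would realize the process probabilistically. The standard argument builds, for each $x\in S$, a probability measure $P_x$ on the path space by using the semigroup to prescribe finite-dimensional distributions via the Kolmogorov extension theorem: for $0\le t_1<\cdots<t_n$ and bounded measurable $f_1,\dots,f_n$,
\[
E_x\!\left[\prod_{i=1}^n f_i(X_{t_i})\right]
= \bigl(T_{t_1}(f_1\, T_{t_2-t_1}(f_2\cdots T_{t_n-t_{n-1}} f_n))\bigr)(x).
\]
Consistency of this family follows from the semigroup property $T_{s+t}=T_s T_t$. The one-point compactification furnishes the cemetery state $\Delta$, which absorbs the defect of mass when $\{T_t\}$ is only sub-Markovian (the contractivity allows $T_t\mathbf{1}\le \mathbf{1}$ rather than equality), so that $\Delta$ plays the role of the point at infinity to which the process is sent upon explosion.

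The substantive analytic work is to upgrade this raw process to one with the regularity required of a Hunt process, namely right-continuous sample paths with left limits and the strong Markov property together with quasi-left-continuity. Here I would invoke the Feller property decisively: strong continuity of $\{T_t\}$ on $C_\infty(S)$ guarantees that $t\mapsto R_\lambda f(X_t)$ is, after passing to a suitable modification, a right-continuous supermartingale for $f\ge 0$, and the usual supermartingale regularization theorem (applied simultaneously to a countable determining family of $R_\lambda f$) produces a version of $X$ with càdlàg paths in $S\cup\{\Delta\}$. The strong Markov property then follows because the Feller resolvent maps $C_\infty$ into $C_\infty$, so that $x\mapsto E_x[\,\cdot\,]$ is continuous and the optional-sampling identity extends from deterministic to stopping times by approximating a stopping time from above by discrete ones and using right-continuity.

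\textbf{The main obstacle} I anticipate is precisely this path-regularization step: establishing that the finite-dimensional process admits a càdlàg modification under only the stated Feller hypothesis, and that this modification genuinely enjoys the strong Markov property rather than merely the simple Markov property. The delicate point is the interchange of the limit defining a stopping time with the expectation operator, which is where Feller continuity (the map $x\mapsto T_t f(x)$ being continuous for $f\in C_\infty(S)$) is indispensable; without it the simple Markov property need not propagate to stopping times, as the examples later in this paper are designed to illustrate. Quasi-left-continuity, the last defining property of a Hunt process, would be handled by the same supermartingale regularity together with the absence of fixed discontinuities for the Feller semigroup, but I would treat it as a consequence of the general theory rather than re-prove it, citing \cite{BlGe} for the complete argument.
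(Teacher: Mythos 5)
The paper offers no proof of this theorem at all: it is stated as a known classical result and delegated entirely to the cited reference \cite{BlGe}, so there is nothing internal to compare against. Your outline is the standard Blumenthal--Getoor construction (resolvent, Kolmogorov extension with cemetery point $\Delta$, supermartingale regularization to c\`adl\`ag paths, strong Markov property via discrete approximation of stopping times, quasi-left-continuity from the general theory), and it is correct in substance; the only slip worth noting is that for $f\ge 0$ the supermartingale is $e^{-\lambda t}R_\lambda f(X_t)$ rather than $R_\lambda f(X_t)$ itself, since $R_\lambda f$ is only $\lambda$-excessive, not excessive.
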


See \cite[Section A.2]{FOT} for the definition of Hunt processes.
We remark that Hunt processes have the strong Markov property (see \cite[Theorem A.2.1]{FOT}).
The Feller property, in particular $\lim _{x\rightarrow \Delta} (T_t f)(x)=0$ for $f \in C_\infty (S)$, implies that, the approaching of the associated stochastic process to $\Delta$ is regarded as explosion.
In Section \ref{sec:Markov} we will see the importance of the Feller property, in particular of the assumption that the semigroup is defined on $C_{\infty}(S)$.

For the strong Markov property of solutions to (time-homogenious) SDEs, the following is known. 
\begin{center}\em
If a SDE has uniquness in law for all initial values\\
in the state space, then the solutions satisfy the strong Markov property.
\end{center}
For the precise statement and the proof, see e.g. \cite[Theorem 5.1, Chapter IV]{IW} and \cite[Theorem 6.2.2]{StVa}.
For notations, let $X$ be the solution to a SDE, $P_x$ the law of $X$ with respect to initial point $x$, $E_x$ the expectation with respect to $P_x$, $\{ {\mathcal F}_t\} _t$ the filtration for solutions, and let $B$ be the Brownian motion of the driving noise in the SDE.
The key points in the proof are the following facts.
\begin{enumerate}
\item[{(s1)}] For any finite stiopping time $\tau$, $t\mapsto B_{t+\tau}-B_{\tau}$ is again a Brownian motion with the filtration $\{ {\mathcal F}_{t+\tau} \} _t$.

\item[{(s2)}] The uniqueness for all initial value $x\in S$ implies the connection of solutions i.e.
\[
P_x( X_{\cdot \wedge \tau}\in A_1, X_{\cdot +\tau} \in A_2) = E_x[ {\mathbb I}_{A_1} (X_{\cdot \wedge \tau}) P_{X_\tau }(A_2)]
\]
for any $A_1, A_2 \in {\mathcal B}(C([0,\infty ); {\mathbb R}^n))$ and a finite stopping time $\tau$.
\end{enumerate}
The elementary fact (s1) is necessary. Unless (s1), the behaviour of $X$ after $\tau$ can be different from the law of $X$ starting at $X_{\tau}$, because of the difference of the law of the driving noises.
On the other hand, (s1) is hardly paid attention, because by the martingale representation theorem (see e.g. \cite[Section 7, Chapter II]{IW}) we can transform SDEs driven by semi-Martingales into those driven by Brownian motions and drift terms.
Thanks to (s1) and the uniqueness of the solution, we have (s2).
If the uniqueness does not hold for an initial point $x_0 \in S$ and $\{ x_0\}$ is not a polar set, then by letting $\tau$ be the hitting time at $x_0$ we can choose two different $P_{X_\tau}(A_2)$ for a $A_2 \in {\mathcal B}(C([0,\infty ); {\mathbb R}^n))$.
Here, we remark that in the argument of singular SPDEs the driving noise is the Ornstein-Uhlenbeck process rather than a white noise, and does not satisfy the property like (s1).
We also remark that for the strong Markov property the uniquness for all initial points in the state space is important.
Precisely, the acceptable exceptional sets are only polar sets.

In the theory of SDEs, if there exist some problematic points for initial conditions, we often remove the points from the state space and regard them as boundaries.
Now recall that the background of the present paper is on singular SPDEs, and that as in Section \ref{sec:SSPDE} we do not regard such points as boundaries.
Another way to treat SDEs on a state space $S$ which is not an Euclidean space, is regarding them as SDEs on a manifold $S$.
The solution to SDEs on manifolds are defined by connections of paths via consistency of the local solutions which are solved on local charts.
The consistency comes from the strong Markov property of the solutions, which is guaranteed from the uniqueness of the property on local charts.
As mentioned above, to obtain the strong Markov property we need the uniqueness of the solution first. 
See \cite[Chapter V]{IW} and \cite{El} for the details of SDEs on manifolds.

On the other hand, once some points are removed from the Euclidean spaces, the space after removed is not complete.
So, we have to regard the exit time $\zeta$ from $S$ as the explosion time (life time).
This is very reasonable in view of the facts above.
Indeed, regarding the exit time $\zeta$ from $S$ as the explosion time corresponds to fixing a boundary condition on $\partial S$, which is the killing boundary condition.
Moreover, once we put the killing boundary condition on $\partial S$, the processes starting at the problematic initial points in $\partial S$ are uniquely determined as the processes staying at the points.
From these arguments, it is reasonable to regard hitting at problematic points as explosion of the processes.
It is also remarked again that the Feller property means explosion at $\partial S$ for the processes as mentioned above.

In Section \ref{sec:Markov} we give some explicit examples of the stochastic processes which do not have the strong Markov property, but have the Markov property.
In Section \ref{sec:nuSDE} we see explicit examples of SDEs such that the existence and uniqueness hold locally in time except one point, but the global uniqueness does not hold.
These examples are constructed by providing a problematic point for initial conditions.
This fact implies the importance for stochastic processes to have the uniqueness in all initial points in the state space.
Moreover, in the same sections, we see that these phenomena can happen for the limits of SDE which have the existence and uniqueness of the solutions.
This implies that the delicateness is hidden by the limiting procedure.

As mentioned in Sections \ref{sec:intro} and \ref{sec:SSPDE}, the motivation of the present paper is not on SDEs, but on SPDEs.
However, since SPDEs can be regarded as infinite-dimensional versions of SDEs, the examples are sufficient to see the delicateness of the arguments in singular SPDEs.

\section{Markov processes without strong Markov properties}\label{sec:Markov}

In this section, we see examples of the Markov processes without strong Markov properties.

\begin{prop}\label{prop:nonMarkov1}
Let $S:= [-\pi , \pi]$ and $Y=(Y_t)$ be the Brownian motion on $S$ with reflecting boundary condition at $-\pi$ and $\pi$.
Let $S^\circ := (-\pi , \pi)$ and define a family of operators $\{ T_t^\circ; t\in [0,\infty )\}$ on $C_b (S^\circ)$ by
\[
T_t^\circ f(x) := E_x[f(Y_t); Y_t\in S^\circ], \quad x\in S^\circ ,\ f\in C_b(S^\circ ).
\]
Then, $\{ T_t^\circ\}$ is a nonnegative, contractive and conservative semigroup on $C_b (S^\circ)$.
Moreover, for $f\in C_\infty (S^\circ )$ it holds that
\[
\lim _{t\downarrow 0} \| T_t^\circ f  - f \| _{\infty} =0.
\]
\end{prop}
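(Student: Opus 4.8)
The plan is to exploit the fact that the two endpoints $\{-\pi,\pi\}$ are negligible for the law of $Y_t$ at any fixed $t>0$, so that $T_t^\circ$ is essentially the transition semigroup of the reflecting Brownian motion read off on the open interval, and then to reduce the strong-continuity assertion to a single uniform-in-$x$ small-time displacement estimate.

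First I would record the reduction. For $t>0$ the law of $Y_t$ under $P_x$ is absolutely continuous (the reflecting Brownian motion has a jointly continuous transition density obtained by the method of images), hence $P_x(Y_t\in\{-\pi,\pi\})=0$ and $P_x(Y_t\in S^\circ)=1$. Consequently, if $\tilde f$ denotes any bounded measurable extension of $f$ to $S$, then
\[
T_t^\circ f(x)=E_x[\tilde f(Y_t)],\qquad x\in S^\circ .
\]
From this identity nonnegativity and contractivity ($\|T_t^\circ f\|_\infty\le\|f\|_\infty$) are immediate, conservativity is exactly the equality $P_x(Y_t\in S^\circ)=1$, membership of $T_t^\circ f$ in $C_b(S^\circ)$ follows from continuity of the density in $x$ together with dominated convergence over the bounded interval $S$, and the semigroup identity $T_{t+s}^\circ=T_t^\circ T_s^\circ$ follows from the Markov property of $Y$, the indicator $\mathbb{I}_{\{Y_t\in S^\circ\}}$ being removable since it has full probability.

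For the strong continuity at $t=0$ I would use the hypothesis $f\in C_\infty(S^\circ)$ in an essential way: since the one-point compactification of $S^\circ=(-\pi,\pi)$ identifies the two ends, $f\in C_\infty(S^\circ)$ means precisely that $f$ is continuous on $(-\pi,\pi)$ with $\lim_{x\to\pm\pi}f(x)=0$, so $f$ extends to $\tilde f\in C(S)$ with $\tilde f(\pm\pi)=0$, and $\tilde f$ is uniformly continuous on the compact set $S$. Writing, for $\delta>0$,
\[
|T_t^\circ f(x)-f(x)|\le E_x\big[|\tilde f(Y_t)-\tilde f(x)|\big]\le \omega(\delta)+2\|f\|_\infty\, P_x(|Y_t-x|\ge\delta),
\]
where $\omega$ is the modulus of continuity of $\tilde f$, it remains only to control the last probability uniformly in $x$.

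The main obstacle, and the only genuinely probabilistic input, is this uniform-in-$x$ bound on the small-time displacement of the reflecting motion. Here I would invoke the reflection (folding) representation $Y_t=\phi(x+B_t)$, where $B$ is a standard Brownian motion and $\phi:{\mathbb R}\to[-\pi,\pi]$ is the $1$-Lipschitz folding map with $\phi(x)=x$ for $x\in S$; since $\phi$ is a contraction fixing $S$, one gets $|Y_t-x|\le|B_t|$ and hence
\[
\sup_{x\in S^\circ}P_x(|Y_t-x|\ge\delta)\le P(|B_t|\ge\delta)\xrightarrow[t\downarrow0]{}0 .
\]
Inserting this into the previous display gives $\limsup_{t\downarrow0}\|T_t^\circ f-f\|_\infty\le\omega(\delta)$ for every $\delta>0$, and letting $\delta\downarrow0$ yields the claim. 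I expect the real delicacy to lie exactly at the endpoints: the vanishing of $f$ at $\pm\pi$ is what makes $\tilde f$ continuous, hence uniformly continuous, on the closed interval, whereas for a general $f\in C_b(S^\circ)$ this extension step breaks down — which is precisely the pathology the proposition is designed to isolate.
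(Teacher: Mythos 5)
Your proposal is correct and takes essentially the same route as the paper: the paper's (very terse) proof rests on exactly the two facts you isolate, namely $P_x(Y_t\in S\setminus S^\circ)=0$ for every $x\in S^\circ$ and $t\ge 0$, together with the continuity and Markov property of the reflecting Brownian motion on $S$. Your additional ingredients --- extending $f\in C_\infty(S^\circ)$ to $\tilde f\in C(S)$ vanishing at $\pm\pi$ and using the $1$-Lipschitz folding representation $Y_t=\phi(x+B_t)$ to get the uniform small-time displacement bound --- are just the details the paper leaves implicit, worked out correctly.
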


\begin{proof}
For each $t\in [0,\infty )$ and $x\in S^\circ$, $P_x(Y_t\in S\setminus S^\circ) =0$.
From this and the fact that $Y_t$ is a continuous Markov process on $S$, we have the assertion.
\end{proof}

\begin{rem}
It is easy to see that $T_t^\circ$ in Proposition \ref{prop:nonMarkov1} does not generate a continuous Markov process on $S^\circ$.
Indeed, the candidate of the continuous Markov process is $Y$ in Proposition \ref{prop:nonMarkov1}, and the state space of $Y$ cannot be $S^\circ$, but $S$.
The reason why Theorem \ref{thm:BlGe} is not applicable to $T_t^\circ$ is that
\[
\lim _{x\rightarrow \pm \pi} T_t^\circ f(x) = E_{\pm \pi}[f(Y_t)] \quad (\mbox{double-sign in same order}),
\]
and the right-hand side is not $0$ for some $f\in C_\infty (S^\circ)$.
\end{rem}

\begin{prop}\label{prop:nonMarkov2}
Define $Y_t$ as in Proposition \ref{prop:nonMarkov1} and let $Z_t:= (\cos Y_t, \sin Y_t)$ for $t\in [0,\infty )$.
Then, $Z$ is a continuous stochastic process on the unit circle $S^1$ in ${\mathbb R}^2$.
Let
\[
p_t^Z(x,dz) := P_y ( (\cos Y_t, \sin Y_t) \in dz), \quad x\in S^1\setminus (-1,0)
\]
where $y\in (-\pi ,\pi )$ such that $x= (\cos y, \sin y)$.
Then, it holds that
\begin{align}\label{eq:propnonMarkov2-01}
E_x[g(Z_s) f(Z_{t+s})] &= \int_{S^1\setminus (-1,0)} g(w) \left( \int _{S^1\setminus (-1,0)} f(z) p_t^Z(w,dz)\right) p_s^Z(x,dw) , \\
\nonumber&\hspace{2cm} f,g\in C(S^1),\ s,t\in [0,\infty ),\ x\in S^1\setminus (-1,0).
\end{align}
In particular, there exists a modification $\widetilde{Z}$ of $Z$ (i.e. $P(\widetilde{Z}_t=Z_t)=1$ for all $t\in [0,\infty )$) so that $\widetilde{Z}_t \in S^1\setminus (-1,0)$ for all $t\in [0,\infty )$, and all such modifications $\widetilde{Z}$ is a Markov process on $S^1\setminus (-1,0)$.
However, neither $Z$ nor $\widetilde{Z}$ satisfy the strong Markov property.
\end{prop}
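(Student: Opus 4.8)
Everything rests on the fact that the map $y\mapsto (\cos y,\sin y)$ restricts to a homeomorphism of $(-\pi,\pi)$ onto $S^1\setminus (-1,0)$ while collapsing both endpoints $\pm\pi$ to the single point $(-1,0)$; thus $Z=(\cos Y,\sin Y)$ is just the reflecting Brownian motion $Y$ read off on the circle, with the two reflecting boundaries glued to $(-1,0)$. First I would record the one quantitative input behind Proposition \ref{prop:nonMarkov1}: for an interior start and each fixed $t$ the law of $Y_t$ has a density, so $P(Y_t\in\{-\pi,\pi\})=0$. Hence $Z_t\in S^1\setminus (-1,0)$ almost surely for each fixed $t$, and $p_t^Z(w,\cdot)$ is a genuine probability kernel carried by $S^1\setminus (-1,0)$.

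Identity \eqref{eq:propnonMarkov2-01} is then a transcription of the Markov property of $Y$. Writing $\Phi=g\circ(\cos,\sin)$ and $\Psi=f\circ(\cos,\sin)$ and conditioning on $\mathcal F_s^Y$, the Markov property of $Y$ gives $E_y[\Phi(Y_s)\Psi(Y_{t+s})]=E_y[\Phi(Y_s)(P_t^Y\Psi)(Y_s)]$; since $Y_s\in(-\pi,\pi)$ a.s.\ and there $(P_t^Y\Psi)(Y_s)=\int f\,dp_t^Z((\cos Y_s,\sin Y_s),\cdot)$, rewriting the outer expectation through $p_s^Z(x,\cdot)$ yields exactly \eqref{eq:propnonMarkov2-01}. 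For the modification I would set $\widetilde Z_t:=Z_t$ off the null event $\{Z_t=(-1,0)\}$ and $\widetilde Z_t:=z_0$ on it, for a fixed $z_0\in S^1\setminus (-1,0)$; because that event has probability $0$ at every fixed $t$ this is a modification avoiding $(-1,0)$. Any such modification shares the finite-dimensional distributions of $Z$, and since these satisfy the multi-time version of \eqref{eq:propnonMarkov2-01} (proved identically from the Markov property of $Y$) while the process now takes values only where $p_t^Z$ is defined, the Markov property on $S^1\setminus (-1,0)$ follows.

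The substance is the failure of the strong Markov property, whose mechanism is the asymmetry at $(-1,0)$ flagged in the Remark after Proposition \ref{prop:nonMarkov1}: approaching $(-1,0)$ along $Y\to\pi$ (upper arc) gives future law $E_\pi[\cdot]$, along $Y\to-\pi$ (lower arc) gives $E_{-\pi}[\cdot]$, and the reflection symmetry $y\mapsto-y$ of $Y$ maps one to the other, so testing against $f(a,b)=b$ gives $E_\pi[\sin Y_t]>0=-E_{-\pi}[\sin Y_t]$ for small $t>0$. For $Z$ I would take $\tau$ to be the hitting time of $(-1,0)$, which is a stopping time since $Z$ has continuous paths and $(-1,0)$ is closed. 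On $\{Y_\tau=\pi\}$ and $\{Y_\tau=-\pi\}$ --- two events of positive probability, each $\mathcal F_\tau^Z$-measurable because $\mathrm{sign}(\sin Y_s)=\mathrm{sign}((Z_s)_2)$ for $s\uparrow\tau$ records the side of approach --- the strong Markov property of $Y$ gives $E[f(Z_{\tau+t})\mid\mathcal F_\tau^Z]=E_{Y_\tau}[\sin Y_t]$, taking the two distinct values $E_{\pm\pi}[\sin Y_t]$, whereas the strong Markov property of $Z$ would force this conditional expectation to equal the single number $(T_tf)((-1,0))$. This contradiction rules out the strong Markov property for $Z$.

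\textbf{The main obstacle} is transporting this to $\widetilde Z$, which never occupies $(-1,0)$, so the hitting time of $(-1,0)$ is useless. The point to exploit is that $\widetilde Z$ makes ``abnormal'' visits to $z_0$ exactly when $Z$ touches $(-1,0)$, and the future after such a visit is governed by $E_{\pm\pi}$, not by $p_t^Z(z_0,\cdot)$. Choosing $z_0$ and the start so that the first hit of $z_0$ by $\widetilde Z$ is an ordinary visit with positive probability and an abnormal one with positive probability (e.g.\ $z_0$ at angle $\pi/2$, started at angle $0$, the abnormal case being that $Y$ reaches $-\pi$ before angle $\pi/2$), I would run the same contradiction at $\tau=\inf\{t:\widetilde Z_t=z_0\}$: the left limit $\lim_{s\uparrow\tau}\widetilde Z_s$, equal to $z_0$ or to $(-1,0)$, is $\mathcal F_\tau^{\widetilde Z}$-measurable and separates the two cases, on which the conditional future laws differ, contradicting strong Markovianity at the single point $z_0$. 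The delicate points I would have to nail down are that $\tau$ is genuinely a stopping time for the augmented natural filtration of the non-right-continuous process $\widetilde Z$ --- via the d\'ebut theorem applied to the progressively measurable set $\{\widetilde Z=z_0\}$ --- and that both approach events have positive probability; for a modification assigning values other than a fixed $z_0$ on the exceptional set, the same scheme applies after replacing the hitting time of $z_0$ by a stopping time detecting approach to $(-1,0)$.
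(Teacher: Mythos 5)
Your proposal is correct, and most of it coincides with the paper's own argument. The kernel identity is proved the same way in the paper: $P_y(Y_t=\pm\pi)=0$ makes $p_t^Z$ a genuine kernel carried by $S^1\setminus(-1,0)$, and \eqref{eq:propnonMarkov2-01} is the Markov property of $Y$ transported through $y\mapsto(\cos y,\sin y)$, which is injective on $(-\pi,\pi)$. Your disproof for $Z$ --- hitting time $\tau$ of $(-1,0)$, side of approach recorded in $\mathcal{F}_\tau^Z$ by the sign of the second coordinate, the two future laws separated by $f(a,b)=b$ via $E_{-\pi}[\sin Y_t]=-E_{\pi}[\sin Y_t]<0<E_\pi[\sin Y_t]$ for small $t$ --- is exactly the argument the paper merely asserts (``By considering the hitting time $\tau$ of $Z$ at $(-1,0)$, it is easy to prove\dots''), so here you have supplied details the paper omits (note the chain in your text should read $E_\pi[\sin Y_t]>0>E_{-\pi}[\sin Y_t]=-E_\pi[\sin Y_t]$, a harmless slip). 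The one point of genuine divergence is $\widetilde Z$. The paper takes the hitting times $\tau_n$ of the shrinking neighborhoods $\{z\in S^1\setminus\{(-1,0)\}:\mathrm{dist}(z,(-1,0))<1/n\}$ and works at $\tau=\lim_n\tau_n$; this detects approach to the deleted point and hence applies verbatim to \emph{every} modification avoiding $(-1,0)$, which is what the proposition claims. You instead exploit the particular modification you constructed (exceptional values redirected to a fixed $z_0$) and use the hitting time of $z_0$, distinguishing normal from abnormal visits by the left limit $\lim_{s\uparrow\tau}\widetilde Z_s\in\{z_0,(-1,0)\}$; this is a clean, self-contained variant for that one modification, and your stopping-time and positive-probability verifications go through (indeed $\{t:\widetilde Z_t=z_0\}=\{t:Z_t=z_0\}\cup\{t:Z_t=(-1,0)\}$, so $\tau$ is the minimum of two hitting times of closed sets by the continuous process $Z$, and the augmented natural filtrations of $Z$ and $\widetilde Z$ coincide since they are versions of each other). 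But for arbitrary modifications your closing sentence falls back on ``a stopping time detecting approach to $(-1,0)$,'' which is precisely the paper's $\lim_n\tau_n$ device --- so the paper's construction is the more economical one, handling all modifications uniformly, while yours buys extra concreteness in the single case you built explicitly.
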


\begin{proof}
Since $P_y(Y_s =\pm \pi) =0$ for all $y\in (-\pi ,\pi)$ and $t\in [0,\infty )$, it holds that
\[
E_y[f((\cos Y_t, \sin Y_t))] = \int_{S^1\setminus (-1,0)} f(z) p_t^Z(x,dz), \quad f\in L^\infty (S^1),
\]
where $y\in (-\pi ,\pi )$ and $x= (\cos y, \sin y) \in S^1\setminus (-1,0)$.
Hence, by the Markov property of $Y$, we have for $f,g\in C(S^1)$, $s, t\in [0,\infty )$ and $x\in S^1\setminus (-1,0)$
\begin{align*}
E_x[g(Z_s) f(Z_{t+s})] &= E_y[g((\cos Y_s, \sin Y_s)) f(\cos Y_{t+s}, \sin Y_{t+s})] \\
&= E_y\left[ g((\cos Y_s, \sin Y_s)) E_{(\cos Y_s, \sin Y_s)}[f(\cos Y_{t}, \sin Y_{t})] \right] \\
&= \int_{S^1\setminus (-1,0)} g(w) \left( \int _{S^1\setminus (-1,0)} f(z) p_t^Z(w,dz)\right) p_s^Z(x,dw) .
\end{align*}
Thus, \eqref{eq:propnonMarkov2-01} is obtained.

By considering the hitting time $\tau$ of $Z$ at $(-1,0)$, it is easy to prove that $Z$ does not satisfy the strong Markov property.
For $\widetilde{Z}$, consider the sequence of the hitting times $\tau _n$ of $\widetilde{Z}$ at $\{ z\in S^1\setminus \{ (-1,0)\} ; {\rm dist}(z,(-1,0) )< 1/n\}$ and the limit $\tau := \lim _{n\rightarrow \infty} \tau _n$.
Then, similarly to the case of $Z$, we can prove that $\widetilde{Z}$ does not satisfy the strong Markov property.
\end{proof}

\begin{rem}\label{rem:nonMarkov2}
\begin{itemize}
\item Under the setting in Proposition \ref{prop:nonMarkov2} $Y$ is a strong Markov process on $[-\pi , \pi]$, while $Z$ is a stochastic process on $S^1$ but not strong Markov.
Intuitively these facts seem natural, because $[-\pi , \pi ] \ni y \mapsto x\in S^1$ is not injective.

\item For $Z$ to be a strong Markov process we have to give a suitable boundary condition at $(-1,0)$, which is the reflecting boundary condition.
The reflecting boundary condition decompose the point $(-1,0)$ into the two points: the limits along $S^1$ from the upper-half and lower-half planes.

\item The construction of a strong Markov process from $\widetilde{Z}$ is more serious, because $\widetilde{Z}$ cannot be an almost-sure right-continuous process on $S^1\setminus (-1,0)$.
To obtain a right-continuous modification of $\widetilde{Z}$ we have to extend the state space $S^1\setminus (-1,0)$.
When we choose $S^1$ as the extension of $S^1\setminus (-1,0)$, the right-continuous modification of $\widetilde{Z}$ is $Z$ and we need the procedure above to construct a strong Markov process.
The best choice of the extended space of $S^1\setminus (-1,0)$ is a space $\bar{S}$ so that the limits along $S^1$ from the upper-half and lower-half planes are decoupled.
Then, by choosing a right-continuous modification of $\widetilde{Z}$ on $\bar{S}$ we obtain a strong Markov process.
\end{itemize}
\end{rem}

From Propositions \ref{prop:nonMarkov1} and \ref{prop:nonMarkov2} and Remark \ref{rem:nonMarkov2} we see that for the construction of the strong Markov processes, the state space should be chosen suitably so that the process would be right-continuous and the transition probability should be determined for all initial point in the state space.
On the other hand, in view of Theorem \ref{thm:BlGe} if we provide the one-point compactification of the state space and regard reaching of the stochastic process to the extra point as explosion of the process, we obtain a strong Markov process.
In the case of Proposition \ref{prop:nonMarkov2}, this procedure corresponds to giving the absorbing boundary condition (Dirichlet boundary condition) at $(-1,0)$ for $Z$ or $\widetilde{Z}$.

In the setting of Proposition \ref{prop:nonMarkov2}, $Z$ is a stochastic process on $S^1$ such that $Z$ behaves like the Brownian motion on $S^1\setminus (-1,0)$ and reflected at $(-1,0)$.
It is also possible to construct a stochastic process $Z^+$ so that $Z^+$ behaves like $Z$ on $S^1\setminus (-1,0)$, but $Z^+$ passes trough from the lower-half plane to the upper-half plane and is reflected when $Z$ comes from the upper-half plane.
The precise procedure of the construction is as follows.
Let $Y^{+,x}$ be the Brownian motion on $[-\pi ,\pi ]$ starting at $x$ with reflecting boundary condition at $\pi$ and trapped at $-\pi$, and let $\{ \widetilde{Y}^{+,i}; i\in {\mathbb N} \}$ be independent copies of $Y^{+,0}$.
Denote $\tau _{-\pi}^0$ and $\tau _{-\pi }^i$ be the first hitting times of $Y^{+,x}$ and $\widetilde{Y}^{+,i}$ at $-\pi$, respectively.
Define the stochastic process $Z^+$ by
\[
Z_t^{+,x} := \left\{ \begin{array}{l}
(\cos Y_t^{+,x}, \sin Y_t^{+,x}), \quad 0\leq t< \tau_{-\pi}^0, \\
\left( \cos \widetilde{Y}_{t-\tau_{-\pi}^0 + \tau_{-\pi}^1 + \cdots + \tau_{-\pi}^{k-1}}^{+,k}, \sin \widetilde{Y}_{t-\tau_{-\pi}^0 + \tau_{-\pi}^1 + \cdots +\tau_{-\pi}^{k-1}}^{+,k} \right),\\
\qquad \tau_{-\pi}^0 + \tau_{-\pi}^1 + \cdots +\tau_{-\pi}^{k-1} \leq t< \tau_{-\pi}^0 + \tau_{-\pi}^1 + \cdots +\tau_{-\pi}^{k},\ k=0,1,2, \dots
\end{array} \right. 
\]
We remark that $( Z_t^{+,x}; t\in [0,\infty ) )$ is well-defined as a continuous stochastic process, because $\lim _{k\rightarrow \infty} (\tau_{-\pi}^0 + \tau_{-\pi}^1 + \cdots +\tau_{-\pi}^{k})= \infty$ almost surely and
\begin{align*}
&\lim _{t\rightarrow \tau_{-\pi}^0 + \tau_{-\pi}^1 + \cdots \tau_{-\pi}^{k}} \left( \cos \widetilde{Y}_{t-\tau_{-\pi}^0 + \tau_{-\pi}^1 + \cdots+\tau_{-\pi}^{k-1}}^{+,k}, \sin \widetilde{Y}_{t-\tau_{-\pi}^0 + \tau_{-\pi}^1 + \cdots +\tau_{-\pi}^{k-1}}^{+,k} \right) \\
&\quad = (-1,0) = (\cos \widetilde{Y}_0^{+,k+1}, \sin \widetilde{Y}_0^{+,k+1}), \quad k=0,1,2, \dots .
\end{align*}
Then, it is easy to see the stochastic process $( Z_t^{+,x}; t\in [0,\infty ) )$ satisfies the properties mentioned above.
We remark that  $( Z_t^{+,x}; t\in [0,\infty ) )$ satisfies the strong Markov property, because the behavior of $Z^{+,x}$ after hitting $(-1,0)$ does not depend on where $Z^{+,x}$ comes from.

Next we give an example of a stochastic process without the strong Markov property which is constructed by limits of strong Markov processes.
The construction is given by the penalization method, and such a method has been given in \cite{Slo}.

\begin{thm}\label{thm:refl}
For $n\in {\mathbb N}$, let $\varphi \in C^1({\mathbb R})$ be a nonnegative function satisfying
\begin{align*}
&\varphi (x) = 0, \quad x\in (-\infty ,-1] \cup [1,\infty ), \\
&\varphi' (x) \geq 0, \quad x\in [-1,0], \\
&\varphi' (x) \leq 0, \quad x\in [0,1]\\
&\int _{[-1,1]} \varphi (x) dx =1.
\end{align*}
Define $\varphi _n(x)$ by $\varphi _n(x) := n \varphi (nx)$ for $x\in {\mathbb R}$.
Consider the solution $X^n$ to the SDE
\begin{equation}\label{eq:SDEnrefl}
dX^n_t = \sigma (X^n_t) dB_t + b(X^n_t) dt - \varphi _n' (X^n_t) dt
\end{equation}
where $\sigma$ and $b$ are Lipschitz continuous functions such that $\sigma (x)>0$ for all $x\in {\mathbb R}$.

Then, when $\{ X^n\}$ is the family of the solutions to \eqref{eq:SDEnrefl} with tight initial laws, then the laws of $\{ X^n\}$ in $C([0,\infty ) ; {\mathbb R})$ is tight.
Moreover, if $X$ is a limit of a subsequence of the laws of $X^n$, $X$ hits $0$ with positive probability, but does not cross the origin almost surely, and $\{ t\in [0,\infty ); X_t =0\}$ is a null set with respect to the Lebesgue measure almost surely.
In particular, $X$ is not a strong Markov process.
\end{thm}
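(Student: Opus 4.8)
The plan is to read \eqref{eq:SDEnrefl} as a penalized diffusion whose drift $-\varphi_n'$ acts as an increasingly stiff two-sided barrier at the origin, and to show that every subsequential limit is the diffusion $dX=\sigma(X)\,dB+b(X)\,dt$ reflected at $0$ on whichever side it started, so that it reaches $0$ but never changes sign. I would organize everything around the scale functions $s_n$ of \eqref{eq:SDEnrefl}, normalized by $s_n(0)=0$ and
\[
s_n'(x)=\exp\Bigl(-\int_0^x \tfrac{2(b-\varphi_n')}{\sigma^2}\Bigr)=C(x)\exp\Bigl(\int_0^x \tfrac{2\varphi_n'}{\sigma^2}\Bigr),\qquad C(x)=\exp\Bigl(-\int_0^x \tfrac{2b}{\sigma^2}\Bigr),
\]
where $C$ is locally bounded and independent of $n$. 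The sign hypotheses on $\varphi'$ force $\int_0^x 2\varphi_n'/\sigma^2\le 0$, so $s_n'$ is locally bounded above uniformly in $n$; integration by parts gives, for fixed $x\neq 0$ and $n$ large, $s_n'(x)=C(x)\exp(-2n\varphi(0)/\sigma^2(0))(1+o(1))$, i.e. $s_n'(x)=O(e^{-cn})$ with $c=2\varphi(0)/\sigma^2(0)>0$ (note $\varphi(0)=\max\varphi>0$). On the other hand, substituting $x=u/n$ shows $s_n'$ is of order one over an $x$-window of width about $n^{-3/2}$ around $0$, so $\int_{-\epsilon}^\epsilon s_n'$ decays only polynomially in $n$; in any case it decays strictly slower than $e^{-cn}$, which is all I will use.

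First I would establish tightness in $C([0,\infty))$. The only difficulty is the singular drift term $\int_s^t\varphi_n'(X^n_r)\,dr$, whose integrand is of size $n^2$. The point is that $\varphi_n'$ is a spatial derivative of a bump concentrating at $0$: by the occupation-times (It\^o--Tanaka) formula its time integral is controlled by the local time of $X^n$ at the origin and stays bounded in probability on compact time sets. This is exactly the penalization mechanism of \cite{Slo}. Combined with the Lipschitz bounds on $\sigma,b$ and the Burkholder--Davis--Gundy inequality, this yields uniform increment bounds and hence tightness; the same estimates identify each subsequential limit $X$ as the reflected diffusion, the penalization limit producing the boundary local time at $0$.

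For non-crossing, fix $0<\epsilon$ and large $R$ and start from $x_0\in(0,R)$. The scale function gives
\[
P_{x_0}\bigl(X^n\text{ reaches }-\epsilon\text{ before }R\bigr)=\frac{s_n(R)-s_n(x_0)}{s_n(R)-s_n(-\epsilon)},
\]
whose numerator $\int_{x_0}^R s_n'=O(e^{-cn})$ is exponentially small, while the denominator is at least $\int_{-\epsilon}^\epsilon s_n'$, which decays strictly slower; hence the ratio tends to $0$. Letting first $n\to\infty$ and then $R\to\infty$ (using the absence of explosion to interchange the limits) shows that a limit started on the positive half-line a.s.\ never reaches a negative level, so $X_t\ge 0$ for all $t$; symmetrically on the negative side. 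This is the non-crossing assertion. The complementary estimate, with target a small positive level $\delta$ in place of $-\epsilon$, has the factor $e^{-cn}$ cancel between numerator and denominator and so stays of order one; this forces $X$ to come arbitrarily close to $0$, and together with continuity and the reflecting behavior it shows $X$ hits $0$ with positive probability. Finally $\{t:X_t=0\}$ is Lebesgue-null because the limit is a nondegenerate ($\sigma>0$) reflected diffusion, whose occupation measure is absolutely continuous by the occupation-times formula, so a single point carries no occupation time.

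For the failure of the strong Markov property I would choose the initial laws of $X^n$ to charge both $(0,\infty)$ and $(-\infty,0)$. By non-crossing the sign of $X$ is frozen: $X_t\ge 0$ for all $t$ on $\{X_0>0\}$ and $X_t\le 0$ for all $t$ on $\{X_0<0\}$. Let $\tau=\inf\{t:X_t=0\}$, finite with positive probability. On $\{\tau<\infty\}$ the post-$\tau$ path is nonnegative or nonpositive according to the side of the start, which is not measurable with respect to $X_\tau=0$; were $X$ strong Markov, the conditional law of $X_{\tau+\cdot}$ given $\mathcal F_\tau$ would be a single kernel from the state $0$, which cannot be simultaneously supported on nonnegative and on nonpositive paths (the reflected diffusion does not stay at $0$ since $\sigma(0)>0$). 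Hence $X$ is not strong Markov, exactly mirroring Proposition \ref{prop:nonMarkov2}, where the failure occurred at a single boundary point reached from two sides. I expect the main obstacle to be the tightness step, namely the uniform control of the exploding ($n^2$-sized) penalization drift, where the ``derivative of a concentrating bump'' structure and the local-time estimates behind \cite{Slo} are essential; by comparison the scale-function barrier computation, though the conceptual heart of non-crossing, is clean and standard once tightness and the reflected-limit identification are in hand.
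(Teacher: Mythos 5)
Your scale-function analysis is essentially the paper's own argument for non-crossing and for hitting $0$ (the paper normalizes $s_n$ at a point $l<0$ rather than at $0$, but the mechanism is identical: crossing the bump costs a factor of order $e^{cn}$ in scale, and this factor cancels in the hitting-probability ratio only when both target levels lie on the same side of the origin). Your final step is actually more explicit than the paper's, and you correctly note that the non-strong-Markov conclusion requires initial laws charging both half-lines, a point the statement glosses over. However, the two steps you defer are genuine gaps, and they are exactly the places where the paper has to do real work.

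First, tightness. You propose to control $\int_s^t \varphi_n'(X^n_r)\,dr$ via the occupation-times formula and claim it ``stays bounded in probability.'' This is insufficient and, as a mechanism, does not work as stated: tightness in $C([0,\infty))$ requires a uniform-in-$n$ modulus of continuity, not boundedness; and writing $\int_s^t \varphi_n'(X^n_r)\,dr = \int \varphi_n'(a)\,\sigma(a)^{-2}\bigl(L_t^a - L_s^a\bigr)\,da$ one has $\int |\varphi_n'| = 2n\varphi(0) \to \infty$, so everything hinges on cancellation against the spatial regularity of $a \mapsto L^a$, which is only H\"older of order $1/2$; the resulting bound is of order $n^{1/2}$ and diverges. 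The paper sidesteps the singular drift entirely: it takes $f_\varepsilon$ flat on $[-\varepsilon/8,\varepsilon/8]$ and equal to $x$ off $[-\varepsilon/4,\varepsilon/4]$, so that $f_\varepsilon'\varphi_n' \equiv 0$ once $n > 8/\varepsilon$; It\^o's formula for $f_\varepsilon(X^n)$ then contains no penalization term at all, the Burkholder--Davis--Gundy inequality gives the increment estimate, and $|x - f_\varepsilon(x)| \le \varepsilon/4$ transfers it back to $X^n$. Second, the Lebesgue-null occupation of the origin: you deduce it from the identification of the limit as a nondegenerate reflected diffusion, but that identification is precisely what you never prove (it is asserted inside the deferred tightness step, and proving it is comparable in difficulty to the theorem itself). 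The paper never identifies the limit; it proves the occupation bound directly on the approximations: with $G_m'' = g_m \approx {\mathbb I}_{[-\varepsilon,\varepsilon]}$ one has the sign property $G_m'\varphi_n' \le 0$, so It\^o's formula yields an inequality for $E\bigl[\int_0^T g_m(X^n_t)\sigma(X^n_t)^2\,dt\bigr]$ in which the penalization term has been discarded with the correct sign, and this inequality survives the weak limit $n \to \infty$ followed by $m \to \infty$ and $\varepsilon \downarrow 0$. Unless you carry out these two steps (or genuinely import the corresponding estimates from \cite{Slo}, whose arguments are quite different from the It\^o--Tanaka sketch you give), the proposal does not stand on its own; with the paper's two devices substituted in, the rest of your outline goes through.
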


\begin{proof}
First we show the tightness of the laws of $X^n$.
Let $T>0$, $\varepsilon >0$ and choose a nondecreasing smooth function $f_\varepsilon$ such that
\begin{align*}
f_\varepsilon(x)&= x, \quad x\in \left( -\infty , -\frac{\varepsilon}{4}\right] \cup \left[ \frac{\varepsilon}{4}, \infty \right) ,\\
f_\varepsilon(x)&=0, \quad x\in \left[ -\frac{\varepsilon}{8}, \frac{\varepsilon}{8}\right] .
\end{align*}
The fact that $|x-f_\varepsilon (x)| \leq \varepsilon /4$ for $x\in {\mathbb R}$ and the It\^o formula imply that for $h>0$
\begin{align*}
&P\left( \sup _{s,t \in [0,T]; |t-s|<h} |X^n_t - X^n_s| >\varepsilon \right) \\
&\leq P\left( \sup _{s,t \in [0,T]; |t-s|<h} |f_\varepsilon (X^n_t) - f_\varepsilon (X^n_s) | >\frac{\varepsilon}2 \right) \\
&\quad + P\left( \sup _{s,t \in [0,T]; |t-s|<h} \left| [X^n_t - f_\varepsilon (X^n_t)] - [X^n_s - f_\varepsilon (X^n_s)] \right| >\frac{\varepsilon}2 \right) \\
&\leq P\left( \sup _{s,t \in [0,T]; |t-s|<h} \left| \int _s^t f_\varepsilon '(X^n_u) \sigma (X^n_u) dB_u + \int _s^t f_\varepsilon '(X^n_u) b(X^n_u) du \right. \right. \\
&\quad \hspace{3cm} \left. \left. - \int _s^t f_\varepsilon '(X^n_u) \varphi _n' (X^n_u) du + \frac{1}{2} \int _s^t f_\varepsilon ''(X^n_u) \sigma (X^n_u)^2 du \right| >\frac{\varepsilon}2 \right) .
\end{align*}
Note that $f_\varepsilon '(x) \varphi _n'(x) =0$ for $x\in {\mathbb R}$ and $n>8/\varepsilon$.
This inequality and the Chebyshev inequality imply that for $n>8/\varepsilon$
\begin{align*}
&P\left( \sup _{s,t \in [0,T]; |t-s|<h} |X^n_t - X^n_s| >\varepsilon \right) \\
&\leq P\left( \sup _{s,t \in [0,T]; |t-s|<h} \left| \int _s^t f_\varepsilon '(X^n_u) \sigma (X^n_u) dB_u \right| >\frac{\varepsilon}4 \right) \\
&\quad + P\left( \sup _{s,t \in [0,T]; |t-s|<h} \left| \int _s^t f_\varepsilon '(X^n_u) b(X^n_u) du + \frac{1}{2} \int _s^t f_\varepsilon ''(X^n_u) \sigma (X^n_u)^2 du \right| >\frac{\varepsilon}4 \right) \\
&\leq P\left( \sup _{s,t \in [0,T]; |t-s|<h} \left| \int _s^t f_\varepsilon '(X^n_u) \sigma (X^n_u) dB_u \right| >\frac{\varepsilon}4 \right) \\
&\quad + \frac{16}{\varepsilon ^2}E\left[ \sup _{s,t \in [0,T]; |t-s|<h} \left| \int _s^t \left( f_\varepsilon '(X^n_u) b(X^n_u) + \frac{1}{2} f_\varepsilon ''(X^n_u) \sigma (X^n_u)^2 \right) du  \right| ^2 \right] \\
&\leq P\left( \sup _{s,t \in [0,T]; |t-s|<h} \left| \int _s^t f_\varepsilon '(X^n_u) \sigma (X^n_u) dB_u \right| >\frac{\varepsilon}4 \right) + \frac{16}{\varepsilon ^2} ( \| f_\varepsilon ''\| _\infty ^2 \| \sigma \| _\infty ^4 h + \| f_\varepsilon '\| _\infty ^2 \| b\| _\infty ^2 h^2).
\end{align*}
On the other hand, the Burkholder-Davis-Gundy inequality implies
\begin{align*}
E\left[  \left| \int _s^t f_\varepsilon '(X^n_u) \sigma (X^n_u) dB_u \right| ^4 \right]
&\leq C E\left[ \left( \int _s^t |f_\varepsilon '(X^n_u)|^2 \sigma (X^n_u)^2 du \right) ^2 \right] \\
&\leq C \| f_\varepsilon '\| _\infty ^4 \| \sigma \| _\infty ^4 (t-s)^2 .
\end{align*}
As in the proof of \cite[Theorem 4.3 of Chapter I]{IW}, this estimate yields
\[
\lim _{h\downarrow 0} \sup _n P\left( \sup _{s,t \in [0,T]; |t-s|<h} \left| \int _s^t f_\varepsilon '(X^n_u) \sigma (X^n_u) dB_u \right| >\frac{\varepsilon}4 \right) =0 .
\]
Thus, we obtain
\[
\lim _{h\downarrow 0} \sup _n P\left( \sup _{s,t \in [0,T]; |t-s|<h} |X^n_t - X^n_s| >\varepsilon \right) = 0.
\]
This convergence and the tightness of the initial laws of $X^n$ yield the tightness of the laws of $X^n$ in $C([0,T]; {\mathbb R})$ (see \cite[Theorem 4.2 of Chapter I]{IW}).

Next we show that $X$ hits $0$ with positive probability, but does not cross the origin almost surely.
Let
\[
L := \frac{1}{2}\sigma (x)^2 \frac{d^2}{dx^2} + b(x)\frac{d}{dx}
\]
and let $l,r \in {\mathbb R}$ such that $l<r$.
Define a function $s_n$ on ${\mathbb R}$ by
\[
s_n(x) := \int _l^x \exp \left( -\int _l^y \frac{2(b(z)-\varphi _n'(z))}{\sigma (z)^2} dz \right) dy.
\]
Then, we have
\begin{equation}\label{eq:proprefl01}
\left( L- \varphi _n(x)\frac{d}{dx}\right) s_n(x) =0.
\end{equation}
Let $X^{n,x}_t$ be the solution starting at $x \in (-1,1)$, and let $\tau_c$ be the first hitting time of $X^{n,x}_t$ at $c$.
The general theory of one-dimensional diffusion processes and \eqref{eq:proprefl01} imply that for $l<x<r$
\[
P(X^{n,x}_{\tau_l\wedge \tau_r}=r) = \frac{s_n(x)-s_n(l)}{s_n(r)-s_n(l)} = \frac{s_n(x)}{s_n(r)}.
\]
Note that $\varphi (0)>0$.
In the case that $l<0<x<r$, an explicit calculation implies that
\begin{align*}
s_n(x)&\geq \int _l^0 \exp \left( -\int _l^y \frac{2(b(z)-\varphi _n'(z))}{\sigma (z)^2} dz \right) dy\\
&\geq \int _l^0 \exp \left( \left( \inf _{z\in [-(1/n),0]} \frac{1}{\sigma (z)^2} \right) \int _l^y \varphi _n' (z) dz -\int _{l}^y \frac{2b(z)}{\sigma (z)^2} dz \right) dy\\
&= \int _l^0 \exp \left( \left( \inf _{z\in [-(1/n),0]} \frac{1}{\sigma (z)^2} \right) \left( \varphi _n (y) -\varphi _n (l) \right) -\int _{l}^y \frac{2b(z)}{\sigma (z)^2} dz \right) dy\\
&= \int _{l/n}^0 \exp \left( \left( \inf _{z\in [-(1/n),0]} \frac{1}{\sigma (z)^2} \right) \cdot n \left( \varphi (\tilde{y}) -\varphi (nl) \right) -\int _{l}^{\tilde{y}/n} \frac{2b(z)}{\sigma (z)^2} dz \right) d\tilde{y}\\
&\rightarrow \infty \qquad \mbox{as}\ n\rightarrow \infty .
\end{align*}
Similarly, in the case that $l<x<0<r$,
\begin{align*}
s_n(x)&\leq \int _l^x \exp \left( \left( \sup _{z\in [-(1/n),0]} \frac{1}{\sigma (z)^2} \right) \int _l^y \varphi _n' (z) dz -\int _l^y \frac{2b(z)}{\sigma (z)^2} dz \right) dy\\
&\rightarrow \int _l^x \exp \left( -\int _l^y \frac{2b(z)}{\sigma (z)^2} dz \right) dy <\infty \qquad \mbox{as}\ n\rightarrow \infty .
\end{align*}
These yield
\begin{equation}\label{eq:proprefl02}
\lim _{n\rightarrow \infty} P(X^{n,x}_{\tau _l \wedge \tau_r}=r) =0, \quad l<x<0<r.
\end{equation}
Moreover, the fact that $P(X^{n,x}_{\tau_l \wedge \tau_r}=l) + P(X^{n,x}_{\tau_l \wedge \tau_r}=r) =1$ for all $n$ implies that
\begin{equation}\label{eq:proprefl03}
\lim _{n\rightarrow \infty} P(X^{n,x}_{\tau_l \wedge \tau_r}=l) =1, \quad l<x<0<r.
\end{equation}
Similarly, for $l<x<r<0$ we also have
\[
\lim _{n\rightarrow \infty} P(X^{n,x}_{\tau_l \wedge \tau_r}=r) = \lim _{n\rightarrow \infty} \frac{s_n(x)}{s_n(r)} = \frac{\int _l^x \exp \left( -\int _l^y \frac{2b(z)}{\sigma (z)^2} dz \right) dy}{\int _l^r \exp \left( -\int _l^y \frac{2b(z)}{\sigma (z)^2} dz \right) dy}.
\]
This implies that for $l<x<0$ there exists a constant $c_{l,x} \in (0,1)$ such that
\begin{equation}\label{eq:proprefl04}
\lim _{r\uparrow 0} \lim _{n\rightarrow \infty} P(X^{n,x}_{\tau_l \wedge \tau_r}=r) = c_{l,x} .
\end{equation}
From \eqref{eq:proprefl02}, \eqref{eq:proprefl03} and \eqref{eq:proprefl04} we see that the limit process $X$ of $X^n$ in law does not cross the origin from $\{ x<0\}$ to $\{ x>0\}$, and $X$ starting at $x<0$ hits $0$ with positive probability.
The proofs that $X$ does not cross the origin from $\{ x>0\}$ to $\{ x<0\}$ and $X$ starting at $x<0$ hits $0$ with positive probability, are similar to the argument above.
So we omit them.

Finally, we see that the limit process $X$ is not trapped at $0$.
For the proof it is sufficient to show that
\begin{equation}\label{eq:proprefl05}
\lim _{\varepsilon \downarrow 0} E\left[ \int _0^T {\mathbb I}_{[-\varepsilon ,\varepsilon]} (X_t) dt \right] =0
\end{equation}
for $T>0$ under the condition $\sup _{n\in {\mathbb N}} E[|X_0^n|^2]< \infty$.
Fix $\varepsilon \in (0,1)$ and $T>0$.
Choose a sequence $\{ g_m\} \subset C_0({\mathbb R})$ so that ${\mathbb I}_{[-\varepsilon ,\varepsilon]} \leq g_m \leq {\mathbb I}_{[-1,1]}$ for $m\in {\mathbb N}$ and $g_m (x) \downarrow {\mathbb I}_{[-\varepsilon ,\varepsilon]}(x)$ as $m\in {\mathbb N}$ for each $x\in {\mathbb R}$, and define $G_m \in C^2({\mathbb R})$ by
\[
G_m (x) := \int _0^x \left( \int _0^y g_m (z) dz\right) dy .
\]
Then, the It\^o formula implies
\begin{align*}
&E[G_m (X^n_T)] - E[G_m (X^n_0)]\\
&= \frac{1}{2}E\left[ \int _0^T g_m (X^n_t) \sigma (X^n_t)^2 dt \right] + E\left[ \int _0^T G_m'(X^n_t) b(X^n_t) dt \right] - E\left[ \int _0^T G_m'(X^n_t) \varphi _n' (X^n_t) dt \right] .
\end{align*}
Noting that the definition of $G_m$ yields that 
\[
G_m'(x) \varphi _n' (x) \leq 0, \quad x\in {\mathbb R},
\] 
we have
\[
E\left[ \int _0^T g_m(X^n_t) \sigma (X^n_t)^2 dt \right] \leq 2E[G_m(X^n_T)] - 2E[G_m(X^n_0)] - 2E\left[ \int _0^T G_m'(X^n_t) b(X^n_t) dt \right] .
\]
By the convergence of $X^n$ to $X$ in law on $C([0,\infty ))$, it follows that
\[
E\left[ \int _0^T g_m(X_t) \sigma (X_t)^2 dt \right] \leq 2E[G_m(X_T)] - 2E[G_m(X_0)] - 2E\left[ \int _0^T G_m'(X_t) b(X_t) dt \right] .
\]
Here, we remark that $|G_m(x)| \leq |x|$ for $x\in {\mathbb R}$ and that $\sup _{t\in [0,T]} E[ |X_t^n|^2] < \infty$ follows from application of the It\^o formula to $f(X^n_t)$ for some $f\in C^2({\mathbb R})$ such that 
\[ \left\{ \begin{array}{l}
x^2 \leq f(x) \leq 2(1+x^2), \quad x\in {\mathbb R},\\
f(x)=1, \quad x\in [-1,1],
\end{array}\right. \]
and a standard argument (c.f. e.g. \cite[Theorem 2.4, Chapter IV]{IW}).
Since
\begin{align*}
\lim _{m\rightarrow \infty} G_m'(x) &= \int _0^x {\mathbb I}_{[-\varepsilon ,\varepsilon]} (z) dz = x{\mathbb I}_{(-\varepsilon , \varepsilon )} (x) + \varepsilon {\mathbb I}_{[\varepsilon , \infty )} (x) - \varepsilon {\mathbb I}_{(-\infty , -\varepsilon ]} (x), \\
\lim _{m\rightarrow \infty} G_m(x) &= \int _0^x \left( \int _0^y {\mathbb I}_{[-\varepsilon ,\varepsilon]} (z) dz\right) dy = \frac{x^2}{2} {\mathbb I}_{(-\varepsilon ,\varepsilon )} (x) + \varepsilon \left( |x|- \frac{\varepsilon}{2}\right) {\mathbb I}_{(-\infty , -\varepsilon ] \cup [\varepsilon , \infty )}(x),
\end{align*}
by the dominate convergence theorem we have
\begin{align*}
&E\left[ \int _0^T {\mathbb I}_{[-\varepsilon ,\varepsilon]} (X_t) \sigma (X_t)^2 dt \right] \\
&\leq 2E\left[ \frac{X_T^2}{2} {\mathbb I}_{(-\varepsilon ,\varepsilon )} (X_T) + \varepsilon |X_T|\right] + 2E\left[ \frac{X_0^2}{2} {\mathbb I}_{(-\varepsilon ,\varepsilon )} (X_0) + \varepsilon |X_0| \right] \\
&\quad + 2\varepsilon E\left[ \int _0^T |b(X_t)| dt \right] .
\end{align*}
Therefore, by noting that
\[
E\left[ \int _0^T {\mathbb I}_{[-\varepsilon ,\varepsilon]} (X_t) dt \right] \leq \left( \inf _{x\in [-\varepsilon ,\varepsilon]}\frac{1}{\sigma (x)^2}\right) E\left[ \int _0^T {\mathbb I}_{[-\varepsilon ,\varepsilon]} (X_t) \sigma (X_t)^2 dt \right] .
\]
and applying the dominate convergence theorem again for $\varepsilon \downarrow 0$, we obtain \eqref{eq:proprefl05}.
\end{proof}

Theorem \ref{thm:refl} implies that in general the limit in law of strong Markov processes is not a strong Markov process.
The origin $0$ should be regarded as a reflecting boundary for the limit $X$ in Theorem \ref{thm:refl}, and a situation similar to Proposition \ref{prop:nonMarkov2} occurs.
As we have seen from Proposition \ref{prop:nonMarkov2} and the discussion after that, for the strong Markov property it is important for stochastic processes to be determined for all initial points in the state space.
Additionally to this fact, from Theorem \ref{thm:refl} it is seen that whether the stochastic process determined for all initial points in the state space or not is blinded by taking approximations. 

\begin{rem}
In \cite{FlIsRu} ``virtual solutions'' of SDEs with distributional drifts are considered.
However, Assumption 5 in \cite{FlIsRu}, because the derivative $\delta _0'$ of the Dirac delta function on ${\mathbb R}$ does not belong to the Sobolev space $H_p^{-\beta}$ for $\beta < (p-1)/p$.
\end{rem}

\section{Non-unique global solutions}\label{sec:nuSDE}

In this section, we introduce some examples of SDEs that have the uniqueness locally in time and global extendability of solutions, but does not have uniqueness globally in time.

\begin{thm}\label{thm:nuSDE}
Consider the SDE
\begin{equation}\label{eq:nuSDE}
\left\{ \begin{array}{rl}
dX_t &= {\mathbb I}_{{\mathbb R} \setminus \{ 0\}}(X_t)dB_t\\
X_0&=\xi 
\end{array}\right.
\end{equation}
where $B=(B_t; t\in [0,\infty ))$ is a one-dimensional Brownian motion and $\xi$ is a random variable independent of $B$.
If $P(\xi =0)=0$ and $E[|\xi |^2]<\infty$, then the followings hold.
\begin{enumerate}
\item \label{thm:nuSDE1}\eqref{eq:nuSDE} has the unique local-in-time solution $X=(X_t; t\in [0, \tau _0(X)])$ up to the first hitting time $\tau_0(X)$ at $0$, which is positive almost surely.

\item \label{thm:nuSDE2} Any solution $X=(X_t; t\in [0,T])$ has the estimate
\[
E\left[ \sup _{t\in [0,T]}|X_t|^2 \right] < \infty
\]
 for any nonrandom $T\in [0,\infty )$.
In particular, any solution $X$ is extended to a global-in-time solution $X=(X_t; t\in [0,\infty ))$.

\item \label{thm:nuSDE3} For any nonrandom $T\in [0,\infty )$ the two different processes $X_t = \xi + B_t$ and $\widetilde{X}_t = \xi + B_{t\wedge \tau _0(\widetilde{X})}$ are solutions to \eqref{eq:nuSDE}.
In particular, \eqref{eq:nuSDE} does not have the uniqueness of solutions on the time interval $[0,T]$.
\end{enumerate}
\end{thm}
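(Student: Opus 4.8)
The plan is to exploit the single structural fact that the diffusion coefficient $\sigma (x) = \mathbb{I}_{\mathbb{R}\setminus\{0\}}(x)$ equals $1$ at every point except the isolated point $x=0$. Since a continuous martingale with nondegenerate quadratic variation spends zero Lebesgue time at any fixed point, the ``defect'' of $\sigma$ at the origin is invisible to the stochastic integral away from $0$; this is exactly what produces both the local uniqueness and the global non-uniqueness.

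For (i), I would observe that before a solution reaches $0$ the coefficient is identically $1$, so the equation reads $dX_t = dB_t$ and forces $X_t = \xi + B_t$. Setting $\tau_0 := \inf \{ t>0 : \xi + B_t = 0 \}$, continuity of the paths together with $P(\xi =0)=0$ gives $\tau_0 > 0$ almost surely, since $X_0 = \xi \neq 0$. If $X$ and $X'$ are two solutions, then up to the first instant either of them hits $0$ both satisfy $dX = dB$, whence $X = X' = \xi + B$ on that interval; in particular their first hitting times of $0$ coincide and equal $\tau_0$, so uniqueness holds on $[0,\tau_0(X)]$.

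For (ii), any solution $X$ satisfies $X_t = \xi + M_t$ with $M_t := \int_0^t \mathbb{I}_{\mathbb{R}\setminus\{0\}}(X_s)\, dB_s$. As the integrand is bounded by $1$, $M$ is a genuine square-integrable martingale with $\langle M\rangle_t = \int_0^t \mathbb{I}_{\mathbb{R}\setminus\{0\}}(X_s)\, ds \leq t$. Doob's $L^2$-maximal inequality then yields $E[\sup_{t\leq T} M_t^2] \leq 4 E[M_T^2] = 4 E[\langle M\rangle_T] \leq 4T$, and combined with $E[|\xi|^2]<\infty$ and $|X_t|^2 \leq 2|\xi|^2 + 2 M_t^2$ this gives $E[\sup_{t\leq T}|X_t|^2] \leq 2 E[|\xi|^2] + 8T < \infty$. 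Because the coefficient is bounded the solution cannot explode, so any local solution extends to a global one.

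Finally (iii) is the heart of the matter, and I would verify directly that both processes solve the equation. For $X_t = \xi + B_t$, which is a shifted Brownian motion, the occupation time of the origin vanishes, $\int_0^t \mathbb{I}_{\{0\}}(X_s)\, ds = 0$ almost surely; hence $\int_0^t \mathbb{I}_{\{0\}}(X_s)\, dB_s$ has zero quadratic variation and is identically $0$, so $X_t = \xi + \int_0^t \mathbb{I}_{\mathbb{R}\setminus\{0\}}(X_s)\, dB_s$ and $X$ is a solution. For $\widetilde{X}_t = \xi + B_{t\wedge \tau_0}$, before $\tau_0$ it agrees with $\xi + B_t$ (so the computation just made applies), while for $t\geq \tau_0$ it is frozen at $\widetilde{X}_{\tau_0} = \xi + B_{\tau_0} = 0$, where the coefficient vanishes, so the driving integral freezes at $B_{\tau_0}$ as well and the defining identity persists. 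The two solutions differ whenever $\tau_0 < T$ and $B_T \neq B_{\tau_0}$; since $\tau_0$ is the first hitting time of the level $-\xi$ by $B$ one has $P(\tau_0 < T)>0$ for every $T>0$, so $X$ and $\widetilde{X}$ are genuinely distinct and global uniqueness fails. The only delicate point throughout is the occupation-time fact that Brownian motion spends zero Lebesgue time at a point, which simultaneously legitimizes $\xi + B_t$ as a solution and permits the trapped solution $\widetilde{X}$ to coexist with it.
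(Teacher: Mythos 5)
Your proof is correct and follows essentially the same route as the paper: local uniqueness from the coefficient being $1$ before the hitting time of $0$, a maximal-inequality bound (you use Doob's $L^2$ inequality where the paper invokes Burkholder--Davis--Gundy, which is equivalent here), and direct verification of the two explicit solutions in (iii). The occupation-time argument you give for why $\xi + B_t$ solves the equation is precisely the justification behind the step the paper dismisses as ``easy to see,'' so you have in fact supplied slightly more detail than the original.
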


\begin{proof}
The first assertion \ref{thm:nuSDE1} is follows from the facts that the solution $X$ to \eqref{eq:nuSDE} behaves as $B_t$ up to the hitting time $\tau_0(X)$ and that $P(\xi =0)=0$ implies $\tau_0(X)>0$ almost surely.
To prove \ref{thm:nuSDE2}, let $X=(X_t; t\in [0,\infty ))$ be a solution to \eqref{eq:nuSDE}.
Since the Burkholder-Davis-Gundy inequality implies
\begin{align*}
E\left[ \sup _{t\in [0,T]}|X_t|^2 \right] & \leq 2E\left[ \left| \xi \right| ^2\right] + 2 E\left[ \sup _{t\in [0,T]}\left| \int _0^t {\mathbb I}_{{\mathbb R} \setminus \{ 0\}}(X_s)dB_s \right| ^2\right] \\
&\leq 2E\left[ \left| \xi \right| ^2\right] + C E\left[ \int _0^T {\mathbb I}_{{\mathbb R} \setminus \{ 0\}}(X_s)ds \right] \\
&\leq 2E\left[ \left| \xi \right| ^2\right] + CT .
\end{align*}
where $C$ is an absolute constant, \ref{thm:nuSDE2} follows.
Since it is easy to see that the two continuous stochastic processes
\[
X_t := \xi + B_t \quad \mbox{and} \quad \widetilde{X}_t := \xi + B_{t\wedge \tau _0(\widetilde{X})}
\]
are solutions to \eqref{eq:nuSDE}, \ref{thm:nuSDE3} follows.
\end{proof}

\begin{rem}
\begin{itemize}
\item The first assertion of Theorem \ref{thm:nuSDE}\ref{thm:nuSDE2} implies that any solution does not explode in finite time.
Hence, any solution $(X_t; t\in [0,\tau ))$ to \eqref{eq:nuSDE} with a stopping time $\tau$ can be extended to a continuous stochastic process $(\bar{X}_t; t\in [0,\tau ])$.
Therefore, we have a global-in-time solution $Z$ by
\[
Z_t = \left\{ \begin{array}{cl}
\bar{X}_t, & t\in [0,\tau ],\\
Y_{t-\tau}, &t\in (\tau ,\infty ),
\end{array}\right.
\]
where $(Y_t; t\in [0,\infty ))$ is another solution with initial condition $Y_0 = \bar{X}_\tau$ whose driving noise is $\widetilde{B}_t := B_{t+\tau} - B_\tau$.
Here, note that the existence of solutions for all initial points follows from \ref{thm:nuSDE3}, in particular such $Y$ exists.

\item The SDE \eqref{eq:nuSDE} is also treated in \cite{Ba}, and infinitely many solutions are constructed in \cite{Ba} in view of sticky points.
In particular, there exist infinitely many strong Markov processes such that they satisfy \eqref{eq:nuSDE} and the Lebesgue measures of their spending time at $0$ equals to $0$.
\end{itemize}
\end{rem}

The point of Theorem \ref{thm:nuSDE} is that the SDE \eqref{eq:nuSDE} does not have the uniqueness of the solution starting at $0$, and hence solutions of the SDE \eqref{eq:nuSDE} have branches at $0$.
The trick to make the local-in-time unique and globally extendable, but not unique globally in time, is that the initial low do not have a positive mass at the branching points almost surely, i.e. $P(X_0=0)=0$ in the case of \eqref{eq:nuSDE}.
We usually do not assume $P(X_0=0)=0$, and the assumption seems eccentric.
However, two different solutions to \eqref{eq:nuSDE} in Theorem \ref{thm:nuSDE} are obtained by the unique limits of the solutions to different approximations of the SDE, as follows.

\begin{thm}\label{thm:nuSDEapprox}
Let $\xi \in {\mathbb R}$.
\begin{enumerate}
\item \label{thm:nuSDEapprox1}
Let $\varepsilon >0$ and consider the SDE
\begin{equation}\label{eq:nuSDEapprox1}
\left\{ \begin{array}{rl}
dX_t^{\varepsilon} &\displaystyle = \left[ \left( \frac{1}{\varepsilon} \sqrt{|X_t^\varepsilon|}\right) \wedge 1 \right] dB_t\\
X_0^{\varepsilon} &=\xi
\end{array}\right.
\end{equation}
where $B$ is a one-dimensional Brownian motion.
Then, for each $\varepsilon >0$ the solution $X^\varepsilon = (X_t; t\in [0,\infty ))$ to \eqref{eq:nuSDEapprox1} exists pathwise-uniquely, and the limit $X$ of $X^\varepsilon$ in the topology $L^2(\Omega ; C([0,\infty )))$ as $\varepsilon \downarrow 0$ exists and is given by $X_t = \xi +B_{t\wedge \tau _0(X)}$ where $\tau _0(X)$ is the first hitting time of $X$ at $0$.
In particular, the limit $X$ is a solution to \eqref{eq:nuSDE}.

\item \label{thm:nuSDEapprox2}
Let $\varepsilon >0$ and consider the SDE
\begin{equation}\label{eq:nuSDEapprox2}
\left\{ \begin{array}{rl}
dY_t^{\varepsilon} &\displaystyle = {\mathbb I}_{{\mathbb R} \setminus \{ 0\}}(Y_t^{\varepsilon}) dB_t + \varepsilon d\widetilde{B}_t\\
Y_0^{\varepsilon} &= \xi
\end{array}\right.
\end{equation}
where $B$ and $\widetilde{B}$ are two independent one-dimensional Brownian motions.
Then, for each $\varepsilon >0$ the solution $Y^\varepsilon = (Y_t; t\in [0,\infty ))$ to \eqref{eq:nuSDEapprox2} exists pathwise-uniquely, and the limit $Y$ of $Y^\varepsilon$ in the topology $L^2(\Omega ; C([0,\infty )))$ as $\varepsilon \downarrow 0$ exists and is given by $Y_t = \xi +B_t$.
In particular, the limit $Y$ is a solution to \eqref{eq:nuSDE}.
\end{enumerate}
\end{thm}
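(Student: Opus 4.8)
The plan is to handle each part by first securing pathwise uniqueness and existence of a strong solution for fixed $\varepsilon>0$, and then identifying the limit by exploiting the explicit behaviour of the coefficients near the origin. Throughout I may assume $\xi\neq 0$, the case $\xi=0$ being trivial (for \eqref{eq:nuSDEapprox1} the origin is absorbing, so $X^\varepsilon\equiv 0$, and for \eqref{eq:nuSDEapprox2} the explicit formula below covers it).

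For \eqref{eq:nuSDEapprox1}, I would obtain pathwise uniqueness for fixed $\varepsilon$ from the Yamada--Watanabe criterion: since $|\sqrt{|x|}-\sqrt{|y|}|\le\sqrt{|x-y|}$ and taking a minimum with a constant is a contraction, the coefficient $\sigma_\varepsilon(x):=(\varepsilon^{-1}\sqrt{|x|})\wedge 1$ satisfies $|\sigma_\varepsilon(x)-\sigma_\varepsilon(y)|\le\varepsilon^{-1}\sqrt{|x-y|}$, and $\rho(u)=\varepsilon^{-1}\sqrt u$ has $\int_{0+}\rho(u)^{-2}\,du=\infty$; together with weak existence (the coefficient is bounded and continuous) this gives a pathwise-unique strong solution (cf. \cite{IW}). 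I would then use two structural facts: (a) $\sigma_\varepsilon\equiv 1$ on $\{|x|\ge\varepsilon^2\}$, so that, writing $\tau^\varepsilon$ for the first time $X^\varepsilon$ hits $\pm\varepsilon^2$, the equation reads $dX^\varepsilon=dB$ on $[0,\tau^\varepsilon]$ and hence $X^\varepsilon_t=\xi+B_t$ there; and (b) $\sigma_\varepsilon(0)=0$, so by uniqueness $X\equiv 0$ is the solution started at $0$ and the origin is absorbing. Since $X^\varepsilon$ is a driftless local martingale it is in natural scale $s(x)=x$, so an optional-stopping computation shows that, started from $\pm\varepsilon^2$, it reaches level $\pm\delta$ before absorption at $0$ with probability only $\varepsilon^2/\delta$; hence $\sup_{t\ge\tau^\varepsilon}|X^\varepsilon_t|\le\delta$ off an event of probability $\le\varepsilon^2/\delta$.

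To conclude the convergence in part \ref{thm:nuSDEapprox1}, I would note that $\tau^\varepsilon\uparrow\tau_0:=\inf\{t:\xi+B_t=0\}$ and that on $[0,\tau^\varepsilon]$ both $X^\varepsilon$ and the candidate limit $X_t=\xi+B_{t\wedge\tau_0}$ coincide with $\xi+B$. On $[\tau^\varepsilon,\infty)$ the limit obeys $\sup_{t\ge\tau^\varepsilon}|X_t|=\sup_{t\in[\tau^\varepsilon,\tau_0]}|\xi+B_t|\to 0$ a.s.\ by path continuity (as $\xi+B_{\tau_0}=0$), while $\sup_{t\ge\tau^\varepsilon}|X^\varepsilon_t|$ is small with high probability by step (b). Together these give $\sup_{t\in[0,T]}|X^\varepsilon_t-X_t|\to 0$ in probability for every $T$, and a uniform-in-$\varepsilon$ $L^4$-bound on $\sup_{t\le T}|X^\varepsilon_t|$ from the Burkholder--Davis--Gundy inequality (the coefficient is bounded by $1$) provides the uniform integrability needed to upgrade this to the asserted $L^2(\Omega;C([0,\infty)))$ convergence.

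For \eqref{eq:nuSDEapprox2} the key observation is that any solution $Y^\varepsilon$ is a continuous martingale with $d\langle Y^\varepsilon\rangle_t=({\mathbb I}_{{\mathbb R}\setminus\{0\}}(Y^\varepsilon_t)+\varepsilon^2)\,dt\ge\varepsilon^2\,dt$, so by the occupation-time formula $\int_0^t{\mathbb I}_{\{0\}}(Y^\varepsilon_s)\,ds=0$ a.s.; that is, $Y^\varepsilon$ spends no Lebesgue time at the origin. Pathwise uniqueness then follows immediately: for two solutions $Y,Y'$ driven by the same $(B,\widetilde B)$, the difference $D=Y-Y'$ satisfies $dD=({\mathbb I}_{{\mathbb R}\setminus\{0\}}(Y)-{\mathbb I}_{{\mathbb R}\setminus\{0\}}(Y'))\,dB$ (the $\varepsilon\,d\widetilde B$ terms cancel), and $\langle D\rangle$ vanishes because the integrand is nonzero only on $\{Y=0\}\cup\{Y'=0\}$, a set of zero Lebesgue time, whence $D\equiv 0$. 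The same fact makes the solution explicit: $\int_0^t{\mathbb I}_{\{0\}}(Y^\varepsilon_s)\,dB_s$ has zero quadratic variation and therefore vanishes, so $Y^\varepsilon_t=\xi+B_t-\int_0^t{\mathbb I}_{\{0\}}(Y^\varepsilon_s)\,dB_s+\varepsilon\widetilde B_t=\xi+B_t+\varepsilon\widetilde B_t$ (and conversely $\xi+B+\varepsilon\widetilde B$, a nondegenerate martingale spending zero time at $0$, does solve \eqref{eq:nuSDEapprox2}). Convergence $Y^\varepsilon\to\xi+B$ in $L^2(\Omega;C([0,\infty)))$ is then trivial since $\varepsilon\widetilde B\to 0$.

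The main obstacle is the near-origin analysis in part \ref{thm:nuSDEapprox1}: unlike part \ref{thm:nuSDEapprox2}, whose solution is fully explicit, here one must quantify that once $X^\varepsilon$ descends to scale $\varepsilon^2$ it is trapped near $0$ rather than escaping, which is precisely the scale-function estimate above, and then combine this with continuity of Brownian hitting times and a moment bound to obtain convergence in the strong $L^2$-of-supremum topology rather than merely in law. Part \ref{thm:nuSDEapprox2}, by contrast, reduces entirely to the single statement that a solution does not linger at the origin.
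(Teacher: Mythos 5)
Your proof is correct, and it follows the paper's overall skeleton (Yamada--Watanabe plus absorption at the origin in part (i); ``zero Lebesgue time at the origin'' plus the explicit formula in part (ii)) while replacing the paper's two key technical devices with different, arguably cleaner ones. In part (i), the paper controls $E[\sup_{t\le\tau_0(X^\varepsilon)\wedge\tau_0(X)\wedge T}|X^\varepsilon_t-X_t|^2]$ via the Burkholder--Davis--Gundy inequality and dominated convergence, then asserts that $P(\tau_0(X^\varepsilon)\wedge T\ne\tau_0(X)\wedge T)\to0$ and patches the two processes together after their hitting times; you instead note that $X^\varepsilon$ and $X$ agree exactly with $\xi+B$ up to the hitting time $\tau^\varepsilon$ of level $\pm\varepsilon^2$ (where the coefficient is identically $1$), and you bound both tails directly: the maximal inequality for the nonnegative martingale $X^\varepsilon$ absorbed at $0$ gives $P(\sup_{t\ge\tau^\varepsilon}|X^\varepsilon_t|\ge\delta)\le\varepsilon^2/\delta$, while $\sup_{t\ge\tau^\varepsilon}|X_t|\to0$ a.s.\ because $\tau^\varepsilon\uparrow\tau_0(X)$. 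This makes quantitative precisely the step the paper dismisses with ``it is easy to see'' (the coincidence of the stopped hitting times), and it costs nothing extra, since your uniform $L^4$ bound supplies the uniform integrability needed to upgrade convergence in probability to convergence in $L^2(\Omega;C([0,\infty)))$. In part (ii), where the paper proves $E[\int_0^t{\mathbb I}_{\{0\}}(Y^\varepsilon_s)\,ds]=0$ by mollification (the functions $g_\delta$, $G_\delta$), the It\^o formula and a second-moment bound, and then kills $\int_0^\cdot{\mathbb I}_{\{0\}}(Y^\varepsilon_s)\,dB_s$ by the It\^o isometry, you obtain the stronger pathwise identity $\int_0^t{\mathbb I}_{\{0\}}(Y^\varepsilon_s)\,ds=0$ in one stroke from the occupation-time formula combined with $d\langle Y^\varepsilon\rangle_t\ge\varepsilon^2\,dt$, and you annihilate the same stochastic integral through its vanishing quadratic variation; both routes then end identically with $Y^\varepsilon=\xi+B+\varepsilon\widetilde B$ and the trivial convergence $\varepsilon\widetilde B\to0$. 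Two minor remarks: your separate pathwise-uniqueness argument for $D=Y-Y'$ is redundant, since identifying every solution with $\xi+B+\varepsilon\widetilde B$ already yields uniqueness; and the ``in particular'' clauses (that the limits solve \eqref{eq:nuSDE}) should be discharged, as the paper does, by citing Theorem \ref{thm:nuSDE}\ref{thm:nuSDE3}, because your limits are exactly the two solutions exhibited there.
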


\begin{proof}
First we prove \ref{thm:nuSDEapprox1}.
Since the function $x\mapsto \left( \frac{1}{\varepsilon} \sqrt{|x|}\right) \wedge 1$ is $(1/2)$-H\"older continuous, by the Yamada-Watanabe theorem (cf. Theorem 1 in \cite{YW} or Theorem 3.2 of Chapter IV in \cite{IW}) we have the existence and pathwise uniqueness of the solution $X^\varepsilon$ to \eqref{eq:nuSDEapprox1} for any $X_0^\varepsilon \in {\mathbb R}$.
In particular, $X^\varepsilon$ satisfies the strong Markov property (cf. Section 5 of Chapter IV in \cite{IW}).
These and the fact that $X_t^\varepsilon =0$ for $t\in [0,\infty )$ is the solution to \eqref{eq:nuSDEapprox1} with the initial condition $X_0^\varepsilon =0$, imply that
\[
X_t^\varepsilon = X^\varepsilon_{t\wedge \tau _0 (X^\varepsilon)}, \quad t\in [0,\infty )
\]
almost surely.
Let $X_t = \xi +B_{t\wedge \tau _0(X)}$.
Then, for any $T>0$, by the Burkholder-Davis-Gundy inequality we have
\begin{align*}
&E\left[ \sup _{t\in [0, \tau _0 (X^\varepsilon)\wedge \tau _0(X) \wedge T]} \left| X_t^\varepsilon -X_t \right| ^2 \right] \\
&= E\left[ \sup _{t\in [0,T]} \left| \int _0^{\tau _0 (X^\varepsilon)\wedge \tau _0(X) \wedge T} \left\{ \left[ \left( \frac{1}{\varepsilon} \sqrt{|X_s^\varepsilon|}\right) \wedge 1 \right] -1 \right\} dB_s \right| ^2 \right] \\
&\leq C E\left[\int _0^{\tau _0 (X^\varepsilon)\wedge \tau _0(X) \wedge T} \left\{ \left[ \left( \frac{1}{\varepsilon} \sqrt{|X_s^\varepsilon|}\right) \wedge 1 \right] -1 \right\} ^2 ds \right] .
\end{align*}
Hence, the dominate convergence theorem yields
\begin{equation}\label{eq:nuSDEapprox1-1}
\lim _{\varepsilon \downarrow 0} E\left[ \sup _{t\in [0, \tau _0 (X^\varepsilon)\wedge \tau _0(X) \wedge T]} \left| X_t^\varepsilon -X_t \right| ^2 \right] =0, \quad T>0.
\end{equation}
From \eqref{eq:nuSDEapprox1-1} it is easy to see that 
\[
\lim _{\varepsilon \downarrow 0} P \left( \tau _0 (X^\varepsilon) \wedge T \neq \tau _0(X) \wedge T \right) =0, \quad T>0.
\]
From this fact, \eqref{eq:nuSDEapprox1-1} and the fact that $X_t^\varepsilon = X_t =0$ for $t\geq \tau _0 (X^\varepsilon) \vee \tau _0(X)$ almost surely, by the dominate convergence theorem we obtain the convergence of $X^\varepsilon$ to $X$ in the topology $L^2(\Omega ; C([0,\infty )))$.
The part ``In particular'' has been obtained in Theorem \ref{thm:nuSDE}\ref{thm:nuSDE3}.

Next we prove \ref{thm:nuSDEapprox2}.
Let $Y^\varepsilon$ be a solution to \eqref{eq:nuSDEapprox2}.
Note that similarly to the proof of Theorem \ref{thm:nuSDE}, we have
\begin{equation}\label{eq:nuSDEapprox2-1}
E\left[ \sup _{t\in [0,T]}|Y_t^\varepsilon |^2 \right] \leq C(|\xi|^2+ T (1+\varepsilon ^2))
\end{equation}
for any $T>0$ and an absolute constant $C$.
For any $\delta >0$ take $g_\delta \in C_0({\mathbb R})$ such that ${\mathbb I}_{[-\delta , \delta]} \leq g_\delta \leq {\mathbb I}_{[-2\delta , 2\delta]}$ and let
\begin{equation}\label{eq:defG}
G_\delta (x) := \int _{-2\delta}^x \left( \int _{-2\delta}^y g_\delta (z) dz \right) dy , \quad x\in {\mathbb R}.
\end{equation}
Then, the It\^o formula implies
\begin{align*}
E[G_\delta (Y_t^\varepsilon )] - E[G_\delta (\xi )] &= \frac{1}{2} \int _0^t E\left[ g_\delta (Y_s^\varepsilon ) {\mathbb I}_{{\mathbb R} \setminus \{ 0\}}(Y_s^{\varepsilon}) \right] ds + \frac{\varepsilon ^2}{2} \int _0^t E\left[ g_\delta (Y_s^\varepsilon ) \right] ds \\
&\geq \frac{\varepsilon ^2}{2} \int _0^t E\left[ {\mathbb I}_{[-\delta , \delta]} (Y_s^\varepsilon ) \right] ds.
\end{align*}
Since $|G_\delta (x)| \leq 4\delta (|x|+2\delta )$, in view of \eqref{eq:nuSDEapprox2-1} we have
\[
\int _0^t E\left[ {\mathbb I}_{[-\delta , \delta]} (Y_s^\varepsilon ) \right] ds \leq C \delta \varepsilon ^{-2} (\delta + |\xi|^2+ T (1+\varepsilon ^2)).
\]
for $t\in [0,T]$, where $C$ is an absolute constant.
Hence, by letting $\delta \downarrow 0$ we have
\[
E\left[ \int _0^t {\mathbb I}_{\{ 0\}} (Y_s^\varepsilon ) ds \right] = 0
\]
for $t\in [0,\infty )$ and $\varepsilon >0$.
This equality and the It\^o isometry imply that
\[
\int _0^t {\mathbb I}_{{\mathbb R} \setminus \{ 0\}}(Y_s^{\varepsilon}) dB_s = B_t
\]
for any $t\in [0,\infty )$ almost surely. Thus, we obtain that $Y_t^\varepsilon$ satisfies
\begin{equation}\label{eq:nuSDEapprox2-2}
Y_t^{\varepsilon} = \xi + B_t + \varepsilon \widetilde{B}_t, \quad t\in [0,\infty )
\end{equation}
almost surely.
Now it is easy to see that $Y^\varepsilon = (Y_t; t\in [0,\infty ))$ given by \eqref{eq:nuSDEapprox2-2} is the pathwise-unique solution to \eqref{eq:nuSDEapprox2}, and $Y^\varepsilon$ converges to $\xi + B$ in the topology $L^2(\Omega ; C([0,\infty )))$.
The part ``In particular'' has been obtained in Theorem \ref{thm:nuSDE}\ref{thm:nuSDE3}.
\end{proof}

In Theorem \ref{thm:nuSDEapprox}, we constructed two different solutions to \eqref{eq:nuSDE} by limits of the strong Markov processes which are the unique solutions starting at any given initial point to approximate SDEs.
On the other hand, by considering path-dependent SDEs, even without approximations, we can construct a SDE which has the local-in-time pathwise-unique solutions starting at any initial point and all the solutions are extendable to globally-in-time solutions, but the solutions globally in time are not unique.

\begin{thm}\label{thm:nuSDEpdep}
Consider the path-dependent SDE
\begin{equation}\label{eq:nuSDEpdep1}
\left\{ \begin{array}{rl}
dX_t &= \left( {\mathbb I}_{{\mathbb R} \setminus \{ 0\}}(X_t) + {\mathbb I}_{[0,1)}\left( \int _0^t |X_s| ds\right) \right) dB_t\\
X_0&=\xi ,
\end{array}\right.
\end{equation}
where $B$ is a one-dimensional Brownian motion and $\xi$ is a random variable independent of $B$.
\begin{enumerate}
\item \label{thm:nuSDEpdep1}\eqref{eq:nuSDEpdep1} has the pathwise-unique local-in-time solution $X=(X_t; t\in [0, \tilde{\tau} _0(X)])$ up to the stopping time $\tilde{\tau}_0(X)$ defined by 
\[
\tilde{\tau} _0(X) := \inf \left\{ t> 0; \int _0^t |X_s| ds \geq 1 \ \mbox{and}\ X_t =0 \right\},
\]
which is positive almost surely.

\item \label{thm:nuSDEpdep2} When $E[|\xi|^2]<\infty$, any solution $X$ to \eqref{eq:nuSDEpdep1} has the estimate
\[
E\left[ \sup _{t\in [0,T]}|X_t|^2 \right] < \infty
\]
 for any nonrandom $T\in [0,\infty )$.
In particular, any solution $X$ is extended to a global-in-time solution $X=(X_t; t\in [0,\infty ))$.

\item \label{thm:nuSDEpdep3} For any nonrandom $T\in [0,\infty )$, \eqref{eq:nuSDEpdep1} does not have the uniqueness of solutions on the time interval $[0,T]$.
\end{enumerate}
\end{thm}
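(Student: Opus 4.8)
The plan is to split time according to whether the path-dependent clock $A_t:=\int_0^t|X_s|\,ds$ has reached level $1$, and to observe that on each of the two resulting regimes \eqref{eq:nuSDEpdep1} degenerates to an equation with a bounded, purely state-dependent diffusion coefficient. Set $T_1:=\inf\{t>0;A_t\ge 1\}$. For $t<T_1$ the second indicator equals $1$, so the coefficient is ${\mathbb I}_{{\mathbb R}\setminus\{0\}}(X_t)+1\in\{1,2\}$; for $t\ge T_1$ it is ${\mathbb I}_{{\mathbb R}\setminus\{0\}}(X_t)\in\{0,1\}$, i.e. exactly the coefficient of \eqref{eq:nuSDE}. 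Since $A$ is continuous and nondecreasing with $A_0=0$, the set $\{A_t\ge1\}$ equals $[T_1,\infty)$, and hence $\tilde\tau_0(X)=\inf\{t\ge T_1;X_t=0\}\ge T_1>0$ almost surely.

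The crucial step is the regime $[0,T_1)$, where I would reuse the occupation-time argument from the proof of Theorem \ref{thm:nuSDEapprox}\ref{thm:nuSDEapprox2}. Let $X$ be any solution; applying the It\^o formula on $[0,t\wedge T_1]$ to the functions $G_\delta$ from \eqref{eq:defG}, and using that the coefficient is bounded below by $1$ there, one obtains
\[
E[G_\delta(X_{t\wedge T_1})]-E[G_\delta(\xi)]\ge\tfrac12\,E\Big[\int_0^{t\wedge T_1}{\mathbb I}_{\{0\}}(X_s)\,ds\Big].
\]
Letting $\delta\downarrow0$ (using $|G_\delta(x)|\le4\delta(|x|+2\delta)$ together with the a priori $L^2$ bound, and localising at the exit times of $(-N,N)$ if $E[|\xi|^2]=\infty$) forces $E[\int_0^{t\wedge T_1}{\mathbb I}_{\{0\}}(X_s)\,ds]=0$. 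By the It\^o isometry $\int_0^{t}{\mathbb I}_{\{0\}}(X_s)\,dB_s=0$ for $t\le T_1$, so $\int_0^t{\mathbb I}_{{\mathbb R}\setminus\{0\}}(X_s)\,dB_s=B_t$, and since the coefficient carries an extra $+1$ this gives $X_t=\xi+2B_t$ for every $t\le T_1$, almost surely. In particular all solutions coincide on $[0,T_1]$ and $T_1=\inf\{t;\int_0^t|\xi+2B_s|\,ds\ge1\}$ is a functional of $B$ alone, so reading $T_1$ off the solution entails no circularity.

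This yields pathwise uniqueness on $[0,T_1]$; on $[T_1,\tilde\tau_0)$, which is nonempty exactly when $X_{T_1}\neq0$, equation \eqref{eq:nuSDEpdep1} coincides with \eqref{eq:nuSDE} started from $X_{T_1}$, so Theorem \ref{thm:nuSDE}\ref{thm:nuSDE1} gives uniqueness up to the first hitting of $0$, i.e. up to $\tilde\tau_0$; this proves \ref{thm:nuSDEpdep1}. For \ref{thm:nuSDEpdep2} the coefficient is bounded by $2$ at all times, so the Burkholder-Davis-Gundy estimate from the proof of Theorem \ref{thm:nuSDE}\ref{thm:nuSDE2} applies verbatim and gives $E[\sup_{t\in[0,T]}|X_t|^2]<\infty$, whence no solution explodes and every local solution extends globally. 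For \ref{thm:nuSDEpdep3}, after $\tilde\tau_0$ the clock stays $\ge1$, so the equation remains \eqref{eq:nuSDE} started from $X_{\tilde\tau_0}=0$; exactly as in Theorem \ref{thm:nuSDE}\ref{thm:nuSDE3} both the trapped continuation $X_t\equiv0$ and the diffusing continuation $X_t=B_t-B_{\tilde\tau_0}$ solve it for $t\ge\tilde\tau_0$, and splicing each onto the common path on $[0,\tilde\tau_0]$ produces two distinct global solutions.

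The main obstacle I anticipate is keeping the path-dependence honest: the identification $X_t=\xi+2B_t$ on $[0,T_1)$ must not presuppose the value of $T_1$, which is why I run the occupation-time identity with $T_1$ defined from the solution and only afterwards recognise it as a functional of $B$. A secondary, purely technical point is removing the $L^2$ hypothesis on $\xi$ in part \ref{thm:nuSDEpdep1}, which I would handle by the standard localisation at the exit times of $(-N,N)$ and the nonexplosion supplied by part \ref{thm:nuSDEpdep2}.
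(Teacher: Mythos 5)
Your proposal is correct and follows essentially the same route as the paper's proof: the same occupation-time argument with $G_\delta$ from \eqref{eq:defG} shows every solution equals $\xi+2B_t$ up to the time the clock $\int_0^t|X_s|\,ds$ reaches $1$, after which the equation reduces to \eqref{eq:nuSDE} so that Theorem \ref{thm:nuSDE} supplies local uniqueness, global extendability, and the branching at $\tilde{\tau}_0$ that destroys global uniqueness. The only cosmetic differences are that the paper handles random $\xi$ by conditioning on $\sigma(\xi)$ rather than by localisation, and it asserts $X_{\bar{\tau}_1(X)}\neq 0$ almost surely where you instead case-split on whether $X_{T_1}=0$.
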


\begin{proof}
The proof of Theorem \ref{thm:nuSDEpdep} is similar to that of Theorem \ref{thm:nuSDEapprox}.
For the proof of \ref{thm:nuSDEpdep1} we prove only the case that $\xi$ is nonrandom, because the general case follows from this case by consider it under the regular conditional probability measure given by the $\sigma$-field generated by $\xi$.
Let
\[
\bar{\tau} _1(X) := \inf \left\{ t> 0; \int _0^t |X_s| ds \geq 1\right\},
\]
Then, for any solution $X$ to \eqref{eq:nuSDEpdep1} satisfies
\[
X_t = \xi + \int _0^t \left( {\mathbb I}_{{\mathbb R} \setminus \{ 0\}}(X_s) +1 \right) dB_s , \quad t\in [0, \bar{\tau} _1(X)] .
\]
For $\delta >0$, define $g_\delta \in C_0({\mathbb R})$ and $G_\delta \in C^2({\mathbb R})$ as in the proof of Theorem \ref{thm:nuSDEapprox} (see \eqref{eq:defG}).
By the It\^o formula we have
\begin{align*}
E[G_\delta (X_{t\wedge \bar{\tau} _1(X)})] - E[G_\delta (\xi )]
&= \frac{1}{2} E\left[ \int _0^{t\wedge \bar{\tau} _1(X)} g_\delta (X_s) \left( {\mathbb I}_{{\mathbb R} \setminus \{ 0\}}(X_s) + 1 \right) ^2ds \right] \\
&\geq \frac{1}{2} E\left[ \int _0^{t\wedge \bar{\tau} _1(X)} {\mathbb I}_{[-\delta , \delta]} (X_s) ds \right] .
\end{align*}
Similarly to the proof of Theorem \ref{thm:nuSDEapprox}\ref{thm:nuSDEapprox2}, we have
\[
E\left[ \int _0^{t\wedge \bar{\tau} _1(X)} {\mathbb I}_{[-\delta , \delta]} (X_s) ds \right] \leq C \delta \varepsilon ^{-2} (\delta + |\xi|^2 + t (1+\varepsilon ^2)).
\]
for $t\in [0,\infty )$, where $C$ is an absolute constant.
Hence, by letting $\delta \downarrow 0$ and $t\uparrow \infty$ we have
\[
E\left[ \int _0^{\bar{\tau} _1(X)} {\mathbb I}_{\{ 0\}} (X_s) ds \right] = 0.
\]
This equality and the It\^o isometry imply that
\begin{equation}\label{eq:nuSDEpdep01}
X_t = \xi + 2B_t , \quad t\in [0, \bar{\tau} _1(X)]
\end{equation}
almost surely.
This yields the uniqueness of the solution on the time interval $[0, \bar{\tau} _1(X)]$.
From \eqref{eq:nuSDEpdep01} we also have that $0< \bar{\tau} _1(X)< \infty$ almost surely.
It is easy to see that $(X_t; t\in [0, \bar{\tau} _1(X)])$ given by \eqref{eq:nuSDEpdep01} satisfies \eqref{eq:nuSDEpdep1} on $[0, \overline{\tau} _1(X)]$.
Let $Y_t := X_{t+\bar{\tau} _1(X)}$ ($t\in [0,\infty )$).
Then, $Y_t$ satisfies \eqref{eq:nuSDE} with replacement of $\xi$ by $X_{\overline{\tau} _1(X)}$.
Note that $X_{\overline{\tau} _1(X)}\neq 0$ almost surely, because of \eqref{eq:nuSDEpdep01}.
Hence, by Theorem \ref{thm:nuSDE} $Y_t$ is uniquely determined for $t\in [0, \tau _0(Y)]$ where
\[
\tau _0 (Y):= \inf \{ t>0; Y_t=0\} .
\]
Since $\tilde{\tau} _0(X) = \bar{\tau} _1(X) + \tau _0(Y)$, we obtain \ref{thm:nuSDEpdep1}.

Similarly to the proof of Theorem \ref{thm:nuSDE}\ref{thm:nuSDE2}, we have \ref{thm:nuSDEpdep2} from the boundedness of the coefficient of \eqref{eq:nuSDEpdep1}.

Let $T>0$.
Since \eqref{eq:nuSDEpdep01} implies that $P(\bar{\tau} _1(X)<T )>0$, we obtain \ref{thm:nuSDEpdep3} from Theorem \ref{thm:nuSDE}\ref{thm:nuSDE3} and the fact that any stochastic process $X$ which satisfies 
\[
X_t = \left\{ \begin{array}{cl}
\xi + 2B_t , & t\in [0, \bar{\tau} _1(X)]\\
Y_{t-\bar{\tau} _1(X)}, & t\in [\bar{\tau} _1(X),\infty )
\end{array} \right.
\]
where $Y_t$ satisfies \eqref{eq:nuSDE} with replacement of $\xi$ by $X_{\overline{\tau} _1(X)}$, is a solution to \eqref{eq:nuSDEpdep1}.
\end{proof}

The example appeared in Theorem \ref{thm:nuSDEpdep} is of a path-dependent SDE.
However, since the dependence of the path is very simple, we can transform it to a multidimensional SDE which is not path-dependent, as follows.

\begin{cor}\label{cor:nuSDE2dim}
Consider the two-dimensional SDE
\begin{equation}\label{eq:nuSDE2dim}
\left\{ \begin{array}{rl}
d\left( \begin{array}{c}X_t\\ Y_t \end{array}\right)  &= \left( \begin{array}{c} \left( {\mathbb I}_{{\mathbb R} \setminus \{ 0\}}(X_t) + {\mathbb I}_{[0,1)}\left( Y_t \right) \right) dB_t \\ |X_t|dt \end{array}\right) \\
\left( \begin{array}{c}X_0\\ Y_0 \end{array}\right) &= \left( \begin{array}{c}\xi \\0 \end{array}\right) ,
\end{array}\right.
\end{equation}
where $B$ is a one-dimensional Brownian motion and $\xi$ is a random variable independent of $B$.
\end{cor}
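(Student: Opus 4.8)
The plan is to recognize that \eqref{eq:nuSDE2dim} is merely a Markovian (state-space-augmented) reformulation of the path-dependent equation \eqref{eq:nuSDEpdep1}, so that the conclusions of Theorem \ref{thm:nuSDEpdep} transfer with essentially no new analysis.

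First I would exploit the fact that the second component carries no noise. It solves $dY_t = |X_t|\,dt$ with $Y_0 = 0$, so for any solution $(X,Y)$ to \eqref{eq:nuSDE2dim} one has the explicit representation
\[
Y_t = \int_0^t |X_s|\,ds, \quad t\in [0,\infty),
\]
and in particular $Y$ is continuous, nondecreasing, adapted, and starts at $0$. Substituting this expression into the first component shows that $X$ solves exactly the path-dependent SDE \eqref{eq:nuSDEpdep1}. Conversely, given any solution $X$ to \eqref{eq:nuSDEpdep1} driven by the same Brownian motion $B$ with the same initial datum $\xi$, defining $Y_t := \int_0^t |X_s|\,ds$ produces a solution $(X,Y)$ to \eqref{eq:nuSDE2dim}. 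This establishes a bijection between the two solution sets.

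Second, under this identification the coefficient $\mathbb{I}_{[0,1)}(Y_t)$ in \eqref{eq:nuSDE2dim} coincides with $\mathbb{I}_{[0,1)}\!\left( \int_0^t |X_s|\,ds\right)$ in \eqref{eq:nuSDEpdep1}; and since $t\mapsto Y_t$ is continuous and nondecreasing, the set $\{t : Y_t < 1\}$ is precisely the interval $[0, \bar{\tau}_1(X))$ with $\bar{\tau}_1$ as in the proof of Theorem \ref{thm:nuSDEpdep}. Consequently the local pathwise uniqueness up to $\bar{\tau}_1$, the a priori estimate $E[\sup_{t\in[0,T]}|X_t|^2] < \infty$, the global extendability, and the failure of global uniqueness all carry over verbatim. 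In particular, propagating the two explicit branches from Theorem \ref{thm:nuSDE}\ref{thm:nuSDE3} past the deterministic level-crossing yields two distinct solutions $(X,Y)$ of \eqref{eq:nuSDE2dim}, which gives the non-uniqueness.

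The only point requiring care, and the main (though modest) obstacle, is verifying that the correspondence respects the notion of solution in both directions: adaptedness to a common filtration and agreement of the driving noise. This is immediate, since $Y$ is a pathwise-continuous functional of the continuous adapted process $X$, so it adds no randomness and preserves adaptedness. Thus there is no genuine analytic difficulty beyond the translation already carried out for Theorem \ref{thm:nuSDEpdep}.
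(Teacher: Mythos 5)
Your proposal is correct and matches the paper's own proof essentially verbatim: both arguments observe that any solution of \eqref{eq:nuSDE2dim} forces $Y_t = \int_0^t |X_s|\,ds$, use this to set up a one-to-one correspondence between solutions of \eqref{eq:nuSDE2dim} and solutions of the path-dependent SDE \eqref{eq:nuSDEpdep1}, and then transfer the conclusions of Theorem \ref{thm:nuSDEpdep}. Your additional remarks on adaptedness and on the identification of $\{t : Y_t < 1\}$ with $[0,\bar{\tau}_1(X))$ only make explicit what the paper leaves implicit.
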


\begin{proof}
For any solution $X$ to \eqref{eq:nuSDEpdep1}, by letting
\[
Y_t := \int _0^t |X_s|ds,
\]
we have a solution $(X,Y)$ to \eqref{eq:nuSDE2dim}.
On the other hand, any solution $(X,Y)$ to \eqref{eq:nuSDE2dim} satisfies
\[
Y_t = \int _0^t |X_s|ds.
\]
Hence, $X$ is a solution to \eqref{eq:nuSDEpdep1}.
Clearly, the relation between the solutions to \eqref{eq:nuSDEpdep1} and  \eqref{eq:nuSDE2dim} is a one-to-one correspondence.
Therefore, Corollary \ref{cor:nuSDE2dim} follows from Theorem \ref{thm:nuSDEpdep}.
\end{proof}

\begin{rem}
In this section we consider a one-dimensional SDE, which has branches of solutions at one point in the state space.
We chose this SDE, because it is the most simple example without the uniqueness of solutions.
Indeed, we have given different solutions explicitly.
We remark that the same phenomena occur also for other SDEs which have branches of solutions at non-polar sets.
\end{rem}

\section{Shifted equations}\label{sec:shift}

In this section we consider relations between solutions to shifted equations and to original SDEs.
Shifted equations appear in approaches in singular SPDEs, in particular in the paracontrolled calculus (see \cite{GIP}).

First we consider general SDEs and their shifted equations. 
Consider a SDE
\begin{equation}\label{eq:SDEX}
\left\{ \begin{array}{rl}
dX_t &= \sigma _X (t,X_t)dB_t + b_X(t,X_t)dt \\
X_0&=x.
\end{array}\right.
\end{equation}
Let $k\in {\mathbb N}$ and consider a system of SDEs
\begin{equation}\label{eq:SDEY}
\left\{ \begin{array}{rl}
dY_t^i &= \sigma _{Y^i} (t,Y_t^1,Y_t^2, \dots , Y_t^k)dB_t + b_{Y^i} (t,Y_t^1,Y_t^2, \dots , Y_t^k)dt \\
Y_0^i&=y^i.
\end{array}\right.
\end{equation}
for $i=1,2,\dots ,k$.
Here, the dimensions of equations are not mentioned, because any dimension is allowed in next (trivial) proposition.
Let ${\rm Sol}_X$ and ${\rm Sol}_Y$ be the total sets of solutions $X$ and $(Y^1,Y^2,\dots ,Y^k)$ to \eqref{eq:SDEX} and \eqref{eq:SDEY}, respectively.
Note that any sense of the solutions is allowed for the definitions of ${\rm Sol}_X$ and ${\rm Sol}_Y$.

Trivially, the following proposition holds.
\begin{prop}\label{prop:shift1}
\begin{enumerate}
\item If ${\rm Sol}_Y \neq \emptyset$ and there exists a map $f_{Y\rightarrow X}: {\rm Sol}_Y \rightarrow {\rm Sol}_X$, then the existence of the solution to \eqref{eq:SDEY} implies the existence of the solution to \eqref{eq:SDEX} (i.e. ${\rm Sol}_X\neq \emptyset$).

\item If there exists a injective map $f_{X\rightarrow Y}: {\rm Sol}_X \rightarrow {\rm Sol}_Y$, then the uniqueness of the solution to \eqref{eq:SDEY} (i.e. $\# {\rm Sol}_Y =1$) implies the uniqueness of the solution to \eqref{eq:SDEX} (i.e. $\# {\rm Sol}_X =1$).
\end{enumerate}
\end{prop}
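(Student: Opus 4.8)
The plan is to unwind the definitions directly, since the proposition is flagged as trivial and merely records a set-theoretic relation between the solution sets $\mathrm{Sol}_X$ and $\mathrm{Sol}_Y$. No stochastic analysis is needed; the content is entirely about the existence and injectivity of maps between two sets. I would therefore prove each of the two parts by an elementary argument about nonemptiness and cardinality, taking care that ``uniqueness'' is being read here as $\#\mathrm{Sol}=1$, i.e. existence together with uniqueness of the solution.

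For part (i), I would argue as follows. By hypothesis $\mathrm{Sol}_Y \neq \emptyset$ and there is a map $f_{Y\to X}\colon \mathrm{Sol}_Y \to \mathrm{Sol}_X$. Pick any element $(Y^1,\dots,Y^k)\in \mathrm{Sol}_Y$, which exists precisely because the set is nonempty; then $f_{Y\to X}(Y^1,\dots,Y^k)$ is an element of $\mathrm{Sol}_X$. Hence $\mathrm{Sol}_X\neq\emptyset$, which is exactly the existence of a solution to \eqref{eq:SDEX}. The only thing to note is that the conclusion uses the codomain of $f_{Y\to X}$: the map is assumed to land in $\mathrm{Sol}_X$, so its image witnesses a genuine solution of the original equation.

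For part (ii), I would use that an injection cannot increase cardinality. Suppose $\#\mathrm{Sol}_Y=1$, and suppose toward showing $\#\mathrm{Sol}_X=1$. Since $f_{X\to Y}\colon \mathrm{Sol}_X\to\mathrm{Sol}_Y$ is injective, $\#\mathrm{Sol}_X \le \#\mathrm{Sol}_Y = 1$, so $\mathrm{Sol}_X$ has at most one element. To get exactly one (rather than possibly zero), I would observe that the existence of the injective map $f_{X\to Y}$ presupposes $\mathrm{Sol}_X$ is the domain; concretely, if $\mathrm{Sol}_X=\emptyset$ the statement ``$\#\mathrm{Sol}_X=1$'' would fail, so strictly this part should be read under the implicit standing assumption that a solution to \eqref{eq:SDEX} exists, or with ``uniqueness'' meaning ``at most one solution.'' Under the natural reading that uniqueness means $\#\mathrm{Sol}\le 1$, injectivity gives $\#\mathrm{Sol}_X\le\#\mathrm{Sol}_Y\le 1$ immediately.

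The main (and only) obstacle is not mathematical difficulty but a matter of bookkeeping about what ``uniqueness'' is taken to mean and whether existence is folded into it. I would make that convention explicit at the start of the proof so that the two one-line arguments above are unambiguous, and then simply remark that both claims follow from the elementary facts that the image of a nonempty set under any map is nonempty, and that an injection satisfies $\#(\text{domain})\le\#(\text{codomain})$.
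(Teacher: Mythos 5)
Your proof is correct and coincides with the paper's approach: the paper offers no written proof at all, simply prefacing the statement with ``Trivially, the following proposition holds,'' and your set-theoretic unwinding (the image of a nonempty set under $f_{Y\rightarrow X}$ witnesses $\mathrm{Sol}_X\neq\emptyset$ for part (i), and $\#\mathrm{Sol}_X\le\#\mathrm{Sol}_Y$ via injectivity for part (ii)) is exactly the intended argument. Your remark that part (ii), read literally, also needs either the convention that uniqueness means $\#\mathrm{Sol}_X\le 1$ or the standing assumption $\mathrm{Sol}_X\neq\emptyset$ (since the empty map is injective) is a legitimate point of care that the paper glosses over.
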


\begin{rem}
Since the solutions $X$ and $(Y^1,Y^2,\dots ,Y^k)$ to \eqref{eq:SDEX} and \eqref{eq:SDEY}, respectively, have no interaction with each other, the existence (resp. uniqueness) of the solution to \eqref{eq:SDEX} and of the solution to \eqref{eq:SDEY} is equivalent to the existence (resp. uniqueness) of the coupled equation of \eqref{eq:SDEX} and \eqref{eq:SDEY}.
\end{rem}

In view of the paracontrolled calculus, we consider a sufficient conditions that for a given solution $(Y^1,Y^2,\dots ,Y^k) \in C([0,\infty ); {\mathbb R}^d)^{\otimes k}$ to \eqref{eq:SDEY}, the stochastic process $X \in C([0,\infty ); {\mathbb R}^d)$ defined by
\begin{equation}\label{eq:shiftXY}
X=Y^1 + Y^2 + \cdots + Y^k
\end{equation}
satisfies \eqref{eq:SDEX}.
A sufficent condition is that
\begin{equation}\label{cond1} \left\{ \begin{array}{rl}
\displaystyle \sigma _X (\eta ^0 , \eta ^1+ \eta ^2 \dots + \eta ^k) &\displaystyle = \sum _{i=1}^k \sigma _{Y^i} (\eta ^0 , \eta ^1, \dots , \eta ^k), \quad (\eta ^0 , \eta ^1, \dots , \eta ^k) \in {\rm Supp}_Y, \\
\displaystyle b_X (\eta ^0 , \eta ^1+ \eta ^2 \dots + \eta ^k) &\displaystyle = \sum _{i=1}^k b_{Y^i} (\eta ^0 , \eta ^1, \dots , \eta ^k), \quad (\eta ^0 , \eta ^1, \dots , \eta ^k) \in {\rm Supp}_Y, \\
\displaystyle x&\displaystyle = \sum _{i=1}^k y^i,
\end{array}\right. \end{equation}
where ${\rm Supp}_Y$ is the support of the map $(t,\omega ) \mapsto (t,Y_t^1(\omega ),Y_t^2(\omega) , \dots , Y_t^k(\omega))$, i.e. ${\rm Supp}_Y:= \{ (t,Y_t^1(\omega ),Y_t^2(\omega) , \dots , Y_t^k(\omega)); t\in [0,T], \omega \in \Omega \}$.
Here, we note that \eqref{cond1} can be relaxed for $X$ to satisfy \eqref{eq:SDEX} by allowing exceptional sets of $(t,\omega )$ with respect to $dt\otimes P(d\omega )$.

Next, in view of Proposition \ref{prop:shift1}, we consider a sufficient condition for the existence of an injective map $f_{X\rightarrow Y}: {\rm Sol}_X \rightarrow {\rm Sol}_Y$ which satisfies \eqref{eq:shiftXY} with $(Y^1,Y^2,\dots ,Y^k) = f_{X\rightarrow Y} (X)$ for $X \in {\rm Sol}_X$.
In the case that $Y^1$ is given by $X$, and $Y^{i+1}$ is uniquely determined by $X, Y^1, Y^2, \dots , Y^i$ for $i=1,2,\dots , k-1$ inductively, it is easy to construct $f_{X\rightarrow Y}$.
For example, the following proposition holds.

\begin{prop}\label{prop:shift2}
Let $k\in {\mathbb N}$ such that $k\geq 2$, let $X$ be a solution to \eqref{eq:SDEX}, and let $y^i$ be given for $i=1,2,\dots ,k-1$.
Consider coupled SDEs with a solution $(Y^1,Y^2,\dots ,Y^{k-1})$:
\begin{equation}\label{eq:propshift2Y}
\left\{ \begin{array}{rl}
dY_t^i &= \tilde{\sigma} _{Y^i} (t,Y_t^1,Y_t^2, \dots , Y_t^i, X_t)dB_t + \tilde{b}_{Y^i} (t,Y_t^1,Y_t^2, \dots , Y_t^i, X_t)dt \\
Y_0^i&=y^i
\end{array}\right.
\end{equation}
for $i=1,2,\dots ,k-1$.
Assume that \eqref{eq:propshift2Y} is solved pathwise-uniquely and recursively for $i=1,2,\dots , k-1$.
Then, $Y_t^k := X_t - (Y_t^1+ Y_t^2 + \dots + Y_t^{k-1})$ satisfies
\begin{equation}\label{eq:propshift2Yk}
\left\{ \begin{array}{rl}
dY_t^k &= \tilde{\sigma} _{Y^k} (t,Y_t^1,Y_t^2, \dots , Y_t^k, X_t) dB_t + \tilde{b}_{Y^k} (t,Y_t^1,Y_t^2, \dots , Y_t^k, X_t )dt \\
Y_0^k&= y^k := x-(y^1+y^2+ \cdots y^{k-1})
\end{array}\right.
\end{equation}
where
\begin{align*}
\tilde{\sigma} _{Y^k} (t,\eta ^1,\eta ^2, \dots , \eta ^k, \xi ) &:= \sigma _X (t,\xi ) - \sum _{i=1}^{k-1} \tilde{\sigma} _{Y^i} (t,\eta ^1,\eta ^2, \dots , \eta ^i, \xi ), \\
\tilde{b} _{Y^k} (t,\eta ^1,\eta ^2, \dots , \eta ^k, \xi ) &:= b_X (t,\xi ) - \sum _{i=1}^{k-1} \tilde{b} _{Y^i} (t,\eta ^1,\eta ^2, \dots , \eta ^i, \xi )
\end{align*}
for $(t,\eta ^1,\eta ^2, \dots , \eta ^k, \xi) \in [0,T] \times ({\mathbb R}^d)^{\otimes (k+1)}$.
\end{prop}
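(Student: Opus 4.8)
The plan is to obtain \eqref{eq:propshift2Yk} directly by subtracting the equations \eqref{eq:propshift2Y} from \eqref{eq:SDEX}, exploiting the linearity of the It\^o differential; the statement is essentially algebraic, in the same spirit as the ``trivial'' Proposition \ref{prop:shift1}. First I would invoke the recursive pathwise-unique solvability hypothesis: it guarantees that $(Y^1, Y^2, \dots , Y^{k-1})$ exists as an adapted continuous process (constructed by solving for $Y^1$ given $X$, then $Y^2$ given $(Y^1,X)$, and so on), so that the process $Y^k_t := X_t - (Y_t^1 + Y_t^2 + \dots + Y_t^{k-1})$ is a well-defined continuous semimartingale. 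Since $X$ and each $Y^i$ are It\^o processes driven by the \emph{same} Brownian motion $B$, linearity of stochastic and Lebesgue integrals yields
\[
dY_t^k = dX_t - \sum _{i=1}^{k-1} dY_t^i .
\]

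Second, I would substitute \eqref{eq:SDEX} and \eqref{eq:propshift2Y} into this identity and collect the $dB_t$ and $dt$ contributions separately, obtaining
\[
dY_t^k = \left[ \sigma _X(t,X_t) - \sum _{i=1}^{k-1} \tilde{\sigma} _{Y^i}(t,Y_t^1,\dots ,Y_t^i,X_t) \right] dB_t + \left[ b_X(t,X_t) - \sum _{i=1}^{k-1} \tilde{b} _{Y^i}(t,Y_t^1,\dots ,Y_t^i,X_t) \right] dt .
\]
By the very definitions of $\tilde{\sigma} _{Y^k}$ and $\tilde{b} _{Y^k}$ (evaluated at $\xi = X_t$ and $\eta ^i = Y_t^i$), the bracketed coefficients are exactly $\tilde{\sigma} _{Y^k}(t,Y_t^1,\dots ,Y_t^k,X_t)$ and $\tilde{b} _{Y^k}(t,Y_t^1,\dots ,Y_t^k,X_t)$, which is the claimed equation \eqref{eq:propshift2Yk}. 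Finally I would verify the initial condition: evaluating at $t=0$ gives $Y_0^k = X_0 - \sum _{i=1}^{k-1} Y_0^i = x - (y^1 + \dots + y^{k-1}) = y^k$, as asserted.

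Since the content is purely a linearity computation, there is no substantive obstacle. The only two points requiring a word of care are the following. First, one should note that $\tilde{\sigma} _{Y^k}$ and $\tilde{b} _{Y^k}$ are written as functions of $(t,\eta ^1,\dots ,\eta ^k,\xi )$ whereas their defining right-hand sides involve only $\eta ^1,\dots ,\eta ^{k-1}$ and $\xi$; this extra (dummy) dependence on $\eta ^k$ is harmless and merely renders the coefficients formally of the shape required so that \eqref{eq:propshift2Yk} is a genuine member of the family \eqref{eq:SDEY}. Second, the role of the pathwise uniqueness and recursiveness in the hypothesis is not needed for the algebraic identity itself, but it is what makes $(Y^1,\dots ,Y^{k-1})$ a measurable functional of $X$ and $B$, thereby furnishing the injective map $f_{X\rightarrow Y}$ sought before the proposition; I would remark on this to connect the statement back to Proposition \ref{prop:shift1}.
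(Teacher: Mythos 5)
Your proof is correct and is precisely the argument the paper intends: the paper's own proof simply states that the claim is trivial by the definitions of $\tilde{\sigma}_{Y^k}$ and $\tilde{b}_{Y^k}$, and your write-up merely spells out that linearity computation (subtracting the equations, matching $dB_t$ and $dt$ coefficients, and checking the initial condition). No gap and no genuine difference in approach.
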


\begin{proof}
This is trivial, because of the definitions of $\tilde{\sigma} _{Y^k}$ and $\tilde{b} _{Y^k}$.
\end{proof}

\begin{rem}
When we consider the implication of the (time-inhomogeneous) strong Markov property of $X$ form that of $(Y^1,Y^2,\dots ,Y^k)$, we need the uniqueness in law of $(Y^1,Y^2,\dots ,Y^k)$ for all $(s,y^1,y^2,\dots , y^k)\in {\rm Supp}_Y$ (See Section \ref{sec:PreSDE} or \cite[Theorem 5.1, Chapter IV]{IW}).
Here, we emphasize that for the condition ``for all $(s,y^1,y^2,\dots , y^k)\in {\rm Supp}_Y$'' is necessary, because the solution to \eqref{eq:SDEY} has to be determined uniquely for initial points $y_i =Y^i_\tau$ for $i=1,2,\dots ,k$ and any stopping time $\tau$.
\end{rem}

The discussion above is almost trivial, but is the background of the method of singular SPDEs.
Indeed, we can regard the strategy of the method, as follows.
\begin{itemize}
\item[(Step 1)] We regularize the target singular SPDE. Once we regularize the target equation, we have a solution $X^{(n)}$ to the regularized equation.

\item[(Step 2)] Applying an analogue of Proposition \ref{prop:shift2} to the SPDE and we construct the solution $(Y^{(n),1},Y^{(n),2},\dots ,Y^{(n),k})$ to shifted equations. Note that $X^{(n)} = Y^{(n),1} + Y^{(n),2} + \cdots + Y^{(n),k}$.

\item[(Step 3)] Showing the existence of the limit of $(Y^{(n),1},Y^{(n),2},\dots ,Y^{(n),k})$ as $n\rightarrow \infty$ (removing the regularization), we have the limit $X$ of $X^{(n)}$ as $n\rightarrow \infty$.

\item[(Step 4)] We define the solution to the target singular SPDE by $X$.
\end{itemize}
See Section \ref{sec:SSPDE} for the detail of the method in singular SPDEs.
Here, it should be remark that for Proposition \ref{prop:shift2} the existence of the solution $X$ to the original equation is necessary.

Now we see the delicateness of the discussion of the solutions to original equations from the shifted equations by explicit examples.

\begin{prop}\label{prop:shift3}
Consider SDEs for $X$ on ${\mathbb R}$ and for $(Y^1,Y^2)$ on ${\mathbb R}^2$, as follows.
\begin{align}
\label{eq:shiftX}&\left\{ \begin{array}{rl}
dX_t &=  {\mathbb I}_{{\mathbb R}\setminus \{ 0\}} (X_t) dB_t \\
X_0 & = x,
\end{array} \right.
\\
\label{eq:shiftY}&\left\{ \begin{array}{rl}
dY^1_t &= {\mathbb I}_{|y^1|>1} {\mathbb I}_{[0,\tau_0(Y^1+Y^2) )} (t) dB_t\\
dY^2_t &= {\mathbb I}_{|y^1|\leq 1} dB_t\\
(Y^1_0, Y^2_0)& = (y^1, y^2).
\end{array} \right.
\end{align}
Then, we have the following.
\begin{enumerate}
\item \label{prop:shift3-1} For all $(y^1,y^2) \in {\mathbb R}^2$, the solution $(Y^1,Y^2)$ to \eqref{eq:shiftY} exists uniquely.

\item \label{prop:shift3-2} Let $x\in {\mathbb R}$ and $(y^1,y^2) \in {\mathbb R}^2$.
If $x=y^1+y^2$, then $X:=Y^1+Y^2$ is a solution to \eqref{eq:shiftX}.

\item \label{prop:shift3-3} Let $x\in {\mathbb R}$.
Let $(Y_1,Y_2)$ be the solution to \eqref{eq:shiftY} with initial condition $(y_1,y_2) =(0,x)$, and let $(\widetilde{Y}_1,\widetilde{Y}_2)$ be the solution to \eqref{eq:shiftY} with initial condition $(y_1,y_2) =(2,x-2)$.
Then, $Y_1+Y_2$ and $\widetilde{Y}_1 + \widetilde{Y}_2$ are different solutions to \eqref{eq:shiftX} with the same initial condition $X_0=x$.
\end{enumerate}
\end{prop}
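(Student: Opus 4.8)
The plan is to exploit that the coefficients $\mathbb{I}_{|y^1|>1}$ and $\mathbb{I}_{|y^1|\le 1}$ in \eqref{eq:shiftY} depend only on the initial datum $y^1$, so along any solution they are frozen constants, each equal to $0$ or $1$. This decouples the whole problem into the two regimes $|y^1|\le 1$ and $|y^1|>1$, in each of which \eqref{eq:shiftY} is explicitly solvable. When $|y^1|\le 1$ the system reduces to $dY^1_t=0$, $dY^2_t=dB_t$, with the obvious solution $Y^1_t\equiv y^1$ and $Y^2_t=y^2+B_t$. When $|y^1|>1$ it becomes $dY^2_t=0$, $dY^1_t=\mathbb{I}_{[0,\tau_0(Y^1+Y^2))}(t)\,dB_t$; since $Y^2_t\equiv y^2$, the sum $Y^1+Y^2$ is a Brownian motion started at $y^1+y^2$ and frozen at its first visit to $0$, so $Y^1_t=y^1+B_{t\wedge\tau}$ where $\tau$ is the first time $B$ reaches $-(y^1+y^2)$, finite almost surely by recurrence. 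Existence is thus explicit in both regimes, and pathwise uniqueness follows by matching two candidate solutions up to the (common) hitting time and noting that both are frozen afterwards; this gives \ref{prop:shift3-1}.

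For \ref{prop:shift3-2} I take $X:=Y^1+Y^2$ under the constraint $x=y^1+y^2$ and verify \eqref{eq:shiftX} regime by regime. If $|y^1|>1$ then $X_t=x+B_{t\wedge\tau_0(X)}$, and since $X$ is frozen exactly upon reaching $0$ one has $\mathbb{I}_{\mathbb{R}\setminus\{0\}}(X_t)=\mathbb{I}_{[0,\tau_0(X))}(t)$, which immediately identifies $dX_t$ with $\mathbb{I}_{\mathbb{R}\setminus\{0\}}(X_t)\,dB_t$. If $|y^1|\le 1$ then $X_t=x+B_t$, and here lies the only step requiring care: the singular coefficient at $0$ must be shown invisible. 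As in the proof of Theorem \ref{thm:nuSDE}, the It\^o isometry gives $E[(\int_0^t \mathbb{I}_{\{0\}}(X_s)\,dB_s)^2]=\int_0^t P(X_s=0)\,ds=0$, so $\int_0^t \mathbb{I}_{\{0\}}(X_s)\,dB_s=0$ almost surely and hence $X_t=x+\int_0^t\mathbb{I}_{\mathbb{R}\setminus\{0\}}(X_s)\,dB_s$, i.e. $X$ solves \eqref{eq:shiftX}.

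Finally, \ref{prop:shift3-3} is read off by inserting the two prescribed decompositions of $x$. The choice $(y^1,y^2)=(0,x)$ lands in the regime $|y^1|\le 1$ and yields the moving Brownian motion $Y_1+Y_2=x+B$, whereas $(y^1,y^2)=(2,x-2)$ lands in the regime $|y^1|>1$ and yields the stopped Brownian motion $\widetilde Y_1+\widetilde Y_2=x+B_{\cdot\wedge\tau_0}$ with $\tau_0$ the first hitting time of $-x$ by $B$. Both have initial value $x$ and solve \eqref{eq:shiftX} by \ref{prop:shift3-2}, yet they differ on the positive-probability event that $B$ continues past $\tau_0$; indeed these are exactly the two solutions exhibited in Theorem \ref{thm:nuSDE}\ref{thm:nuSDE3}. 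The conceptual content, and the only genuine obstacle, is therefore the occupation-time verification of \ref{prop:shift3-2} in the regime $|y^1|\le 1$; everything else is bookkeeping of two explicitly solvable cases whose sums reproduce the non-uniqueness of \eqref{eq:shiftX}.
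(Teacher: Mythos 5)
Your proof is correct and follows essentially the same route as the paper's: freeze the initial-condition indicators to decouple \eqref{eq:shiftY} into two explicitly solvable regimes, observe that the sums are $x+B_t$ and $x+B_{t\wedge\tau_0}$ respectively, and identify these with the two distinct solutions of Theorem \ref{thm:nuSDE}\ref{thm:nuSDE3}. The only difference is cosmetic: where the paper simply cites Theorem \ref{thm:nuSDE} to conclude \ref{prop:shift3-2}, you inline the It\^o-isometry occupation-time verification, which is the same argument already used in the paper's earlier proofs.
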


\begin{proof}
We have \ref{prop:shift3-1} by solving the SDEs of $(Y^1, Y^2)$ as
\begin{equation}\label{eq:propshift3-01}
\left( \begin{array}{c} Y_t^1\\ Y_t^2\end{array}\right) = \left\{ \begin{array}{c}
\left( \begin{array}{c} y^1 + B_{t\wedge \tau_0(Y^1+Y^2)}\\ y^2\end{array}\right), \quad \mbox{if}\ |y^1|>1,\\[4mm]
\left( \begin{array}{c} y^1\\ y^2 + B_t\end{array}\right), \quad \mbox{if}\ |y^1|\leq 1.
\end{array} \right.
\end{equation}
Let $x:=y^1+y^2$ and $X:=Y^1+Y^2$.
From \eqref{eq:propshift3-01} it holds that
\begin{equation}\label{eq:propshift3-02}
Y_t^1 +Y_t^2 = \left\{ \begin{array}{c}
(y^1 + B_{t\wedge \tau_0(Y^1+Y^2)}) + y^2 = x + B_{t\wedge \tau_0(X)}, \quad \mbox{if}\ |y^1|>1,\\[4mm]
y^1 + (y^2 + B_t) = x + B_t , \quad \mbox{if}\ |y^1|\leq 1.
\end{array} \right.
\end{equation}
Hence, we have \ref{prop:shift3-2} by Theorem \ref{thm:nuSDE}.
From \eqref{eq:propshift3-02}, \ref{prop:shift3-3} follows.
\end{proof}

\begin{rem}
Proposition \ref{prop:shift3} is prepared to see the delicateness of the initial conditions of shifted equations.
From Theorem \ref{thm:nuSDE} we see that the uniqueness of solutions to \eqref{eq:shiftX} does not hold, and that two different solutions to \eqref{eq:shiftX} are chosen by $|y^1|>1$ or $|y^1|\leq 1$.
\end{rem}

Similarly to Theorem \ref{thm:nuSDEapprox}, the SDEs concerned in Proposition \ref{prop:shift3} can be written as a limits of uniquely-solved SDEs, but depending on the initial condition, as follows.

\begin{thm}\label{thm:shift3approx}
Let $\varepsilon >0$ and consider SDEs for $(Y^{1,\varepsilon},Y^{2,\varepsilon})$ on ${\mathbb R}^2$, as follows.
\begin{align}
\label{eq:shiftYapprox}&\left\{ \begin{array}{rl}
dY^{1,\varepsilon}_t &\displaystyle = {\mathbb I}_{(1,\infty )}(|y^1|) \left[ \left( \frac{1}{\varepsilon} \sqrt{|Y^{1,\varepsilon}_t + Y^{2,\varepsilon}_t|}\right) \wedge 1 \right] dB_t\\
dY^{2,\varepsilon}_t &\displaystyle = {\mathbb I}_{[0,1]}(|y^1|) {\mathbb I}_{{\mathbb R}\setminus \{ 0\}} (Y^{1,\varepsilon}_t + Y^{2,\varepsilon}_t ) dB_t + \varepsilon d\widetilde{B}_t\\
(Y^1_0, Y^2_0)& = (y^1, y^2).
\end{array} \right.
\end{align}
Then, we have the following.
\begin{enumerate}
\item \label{thm:shift3approx-1} For all $(y^1,y^2) \in {\mathbb R}^2$, the solution $(Y^{1,\varepsilon},Y^{2,\varepsilon})$ to \eqref{eq:shiftYapprox} exists uniquely, and converges in the topology of $L^2(\Omega ; C([0,\infty )))$ as $\varepsilon \downarrow 0$.

\item \label{thm:shift3approx-2} Let $x\in {\mathbb R}$, $(y^1,y^2) \in {\mathbb R}^2$ and let $(Y^1,Y^2)$ be the limit in the topology of $L^2(\Omega ; C([0,\infty )))$ as $\varepsilon \downarrow 0$ of the unique solutions $(Y^{1,\varepsilon},Y^{2,\varepsilon})$ to \eqref{eq:shiftYapprox} obtained in \ref{thm:shift3approx-1}.
If $x=y^1+y^2$, then $X:=Y^1+Y^2$ is a solution to \eqref{eq:shiftX} in Proposition \ref{prop:shift3}.

\item \label{thm:shift3approx-3} Let $x\in {\mathbb R}$.
For each $\varepsilon >0$, let $(Y^{1,\varepsilon},Y^{2,\varepsilon})$ be the unique solution to \eqref{eq:shiftYapprox} with initial condition $(y_1,y_2) =(0,x)$, and let $(\widetilde{Y}^{1,\varepsilon},\widetilde{Y}^{2,\varepsilon})$ be the unique solution to \eqref{eq:shiftYapprox} with initial condition $(y^1,y^2) =(2,x-2)$.
Denote the limits $(Y^1, Y^2)$ and $(\widetilde{Y}^1, \widetilde{Y}^2)$ in the topology of $L^2(\Omega ; C([0,\infty )))$ as $\varepsilon \downarrow 0$ of $(Y^{1,\varepsilon},Y^{2,\varepsilon})$ and $(\widetilde{Y}^{1,\varepsilon},\widetilde{Y}^{2,\varepsilon})$, respectively.
Then, $Y^1+Y^2$ and $\widetilde{Y}^1 + \widetilde{Y}^2$ are different solutions to \eqref{eq:shiftX} with the same initial condition $X_0=x$.
\end{enumerate}
\end{thm}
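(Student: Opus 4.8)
\medskip
\noindent\textbf{Proof strategy.}
The plan is to reduce everything to Theorem \ref{thm:nuSDEapprox} by exploiting that the two indicators in \eqref{eq:shiftYapprox} are evaluated at the \emph{initial} datum $y^1$, not at the running value $Y^{1,\varepsilon}_t$. Consequently each indicator is a deterministic constant along every trajectory, and the system splits into two decoupled regimes. When $|y^1|>1$ the $Y^{2,\varepsilon}$-equation carries no driving term, so $Y^{2,\varepsilon}$ is the explicit process started at $y^2$, and the scalar sum $X^\varepsilon:=Y^{1,\varepsilon}+Y^{2,\varepsilon}$ obeys exactly \eqref{eq:nuSDEapprox1} with initial value $x=y^1+y^2$. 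When $|y^1|\le 1$ the $Y^{1,\varepsilon}$-equation is silent, so $Y^{1,\varepsilon}\equiv y^1$ and $X^\varepsilon$ obeys exactly \eqref{eq:nuSDEapprox2}. First I would record these two reductions carefully, checking that the initial conditions match.

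For part \ref{thm:shift3approx-1}, pathwise existence and uniqueness of the pair follows in each regime by pairing a trivial (constant or explicit) component with the scalar theory: when $|y^1|>1$ the remaining equation for $Y^{1,\varepsilon}$ has a $(1/2)$-H\"older coefficient in its own variable, so the Yamada--Watanabe theorem applies; when $|y^1|\le 1$ the remaining equation for $Y^{2,\varepsilon}$ is precisely the one whose pathwise uniqueness is established in Theorem \ref{thm:nuSDEapprox}\ref{thm:nuSDEapprox2} through the occupation-time estimate. The $L^2(\Omega;C([0,\infty)))$-convergence of $X^\varepsilon$ is then immediate from Theorem \ref{thm:nuSDEapprox}, and since in each regime one component is constant while the other equals $X^\varepsilon$ minus that component, the convergence transfers from the sum to the pair $(Y^{1,\varepsilon},Y^{2,\varepsilon})$ itself.

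Parts \ref{thm:shift3approx-2} and \ref{thm:shift3approx-3} are then read off. The limiting equation \eqref{eq:shiftX} coincides with \eqref{eq:nuSDE}, and Theorem \ref{thm:nuSDEapprox} guarantees that each limit of $X^\varepsilon$ solves it; hence whenever $x=y^1+y^2$ the sum $X=Y^1+Y^2$ solves \eqref{eq:shiftX}, which is \ref{thm:shift3approx-2}. For \ref{thm:shift3approx-3} I would note that the datum $(0,x)$ has $|y^1|=0\le 1$, so it falls in the second regime and yields the freely crossing limit $x+B_{\cdot}$, whereas $(2,x-2)$ has $|y^1|=2>1$, so it falls in the first regime and yields the trapped limit $x+B_{\cdot\wedge\tau_0}$; these are exactly the two distinct solutions of \eqref{eq:shiftX} exhibited in Theorem \ref{thm:nuSDE}\ref{thm:nuSDE3}, so they differ.

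The main obstacle is the identification of the correct limit in the regime $|y^1|>1$: one must be certain that the degenerate coefficient $(\tfrac1\varepsilon\sqrt{|\cdot|})\wedge 1$ drives $X^\varepsilon$ to the \emph{trapped} solution rather than the freely crossing one, i.e.\ that no transversal noise survives in the sum to restore crossing at the origin. This is precisely the content of Theorem \ref{thm:nuSDEapprox}\ref{thm:nuSDEapprox1}, so the real work is to verify that the reduction delivers \eqref{eq:nuSDEapprox1} verbatim and hence that the cited limit applies; the remaining bookkeeping---matching initial conditions and propagating convergence from the sum to each coordinate---is routine.
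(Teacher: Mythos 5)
Your proposal is correct and follows essentially the same route as the paper's proof: both exploit that the indicators in \eqref{eq:shiftYapprox} are frozen at the initial datum $y^1$, so that in each regime one component is trivial and the shifted sum satisfies \eqref{eq:nuSDEapprox1} or \eqref{eq:nuSDEapprox2} verbatim, after which Theorem \ref{thm:nuSDEapprox} and Theorem \ref{thm:nuSDE}\ref{thm:nuSDE3} deliver all three assertions. Your reading of the $\varepsilon\, d\widetilde{B}_t$ term as inactive in the regime $|y^1|>1$ also agrees with the paper's own proof (which sets $Y^{2,\varepsilon}_t \equiv y^2$ there), so the reduction you carry out is exactly the one the paper performs.
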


\begin{proof}
Note that
\[ \left\{ \begin{array}{l}
Y^{2,\varepsilon}_t =y^2, \quad \mbox{if}\ |y^1| >1, \\
Y^{1,\varepsilon}_t =y^1, \quad \mbox{if}\ |y^1|\leq 1.
\end{array} \right. \]
From Theorem \ref{thm:nuSDEapprox}, \ref{thm:shift3approx-1} follows.
Since
\begin{equation}\label{eq:thmshift3approx01}
X= (Y^1+y^2) {\mathbb I}_{(1,\infty )}(|y^1|) + (y^1+Y^2) {\mathbb I}_{[0,1]}(|y^1|) ,
\end{equation}
\ref{thm:shift3approx-2} also follows from Theorem \ref{thm:nuSDEapprox}.

Now we prove \ref{thm:shift3approx-3}.
In view of \eqref{eq:thmshift3approx01} and Theorems \ref{thm:nuSDEapprox}, by the SDEs which $Y^{1,\varepsilon} +Y^{2,\varepsilon}$ and $\widetilde{Y}^{1,\varepsilon} + \widetilde{Y}^{2,\varepsilon}$ satisfy, we have
\[
Y_t^1+Y_t^2 = x+B_t, \quad \widetilde{Y}_t^1 + \widetilde{Y}_t^2 = x + B_{t\wedge \tau _0(\widetilde{Y}^1+\widetilde{Y}^2)}.
\]
Now it is easy to see that $Y^1+Y^2$ and $\widetilde{Y}^1 + \widetilde{Y}^2$ are different solutions to \eqref{eq:shiftX} (see Theorem \ref{thm:nuSDE}\ref{thm:nuSDE3}).
\end{proof}

\begin{rem}
Note that the SDEs in Proposition \eqref{prop:shift3} does not satisfy \eqref{cond1}.
On the other hand, the limit of \eqref{eq:shiftYapprox} in Theorem \ref{thm:shift3approx} is a shifted equation of \eqref{eq:shiftX}, because
\begin{align*}
&{\mathbb I}_{(1,\infty )}(|y^1|) \left[ \left( \frac{1}{\varepsilon} \sqrt{|Y^{1,\varepsilon}_t + Y^{2,\varepsilon}_t|}\right) \wedge 1 \right] dB_t + {\mathbb I}_{[0,1]}(|y^1|) {\mathbb I}_{{\mathbb R}\setminus \{ 0\}} (Y^{1,\varepsilon}_t + Y^{2,\varepsilon}_t) dB_t + \varepsilon d\widetilde{B}_t \\
&\mathop{\longrightarrow}^{\varepsilon \downarrow 0} {\mathbb I}_{(1,\infty )}(|y^1|) {\mathbb I}_{{\mathbb R}\setminus \{ 0\}}(Y^{1}_t + Y^{2}_t) dB_t + {\mathbb I}_{[0,1]}(|y^1|) {\mathbb I}_{{\mathbb R}\setminus \{ 0\}} (Y^{1}_t + Y^{2}_t) dB_t \\
&= {\mathbb I}_{{\mathbb R}\setminus \{ 0\}} (Y^{1}_t + Y^{2}_t) dB_t.
\end{align*}
\end{rem}

\begin{rem}
The examples in Proposition \eqref{prop:shift3} and Theorem \ref{thm:shift3approx} are SDEs whose coefficients depending on the initial condition $y^1$.
But, the dependence of the initial condition can be removed easily.
For example, we can replace \eqref{eq:shiftY} by
\[
\left\{ \begin{array}{rl}
dY^1_t &= {\mathbb I}_{|Y^3_t|>1} {\mathbb I}_{[0,\tau_0(Y^1+Y^2) )} (t) dB_t\\
dY^2_t &= {\mathbb I}_{|Y^3_t|\leq 1} dB_t\\
dY^3_t &= 0\\
(Y^1_0, Y^2_0, Y^3_0)& = (y^1, y^2, y^3).
\end{array} \right.
\]
\end{rem}

As mentioned in Section \ref{sec:SSPDE}, the solution $X$ to a singular SPDE is defined by $X:=Y_1+Y_2+ \cdots + Y_k$ with a solution $(Y_1, Y_2, \dots ,Y_k)$ to the shifted equation.
Note that we often choose a specific initial condition for the shifted equation.
Indeed, in \cite[Section 4]{AlKu1} and \cite[Section 4]{AlKu2}, for the shifted equation of the $\Phi^4_3$-stochastic quantization equations we choose the initial conditions so that $X_0^{N,(2),<}=0$ and $X_0^{M,N,(2),<}=0$, respectively.
In \cite[Section 3.3]{HKK1} and \cite[Section 4.3]{HKK2}, to construct the solution to the $\exp (\Phi )_2$-stochastic quantization equation, we choose the initial condition $0$ for the shifted equation.
It should be recalled that in many cases, in order for the renormalization constants to be independent of the time parameter, the initial condition of the solution to the linearized equation, which appear as a component of shifted equations (see Section \ref{sec:SSPDE}), is chosen from suitable distributions except a null set with respect to the free field measure.
We often choose the initial conditions like these, because of technical difficulties.
In view of Proposition \ref{prop:shift3} and Theorem \ref{thm:shift3approx}, we keep in mind that the solutions of singular SPDEs are some specific ones for original singular SPDEs, even if the solutions to shifted equations are unique for each initial conditions. 

\bibliographystyle{plain}
\bibliography{RemStochSystem.bib}

\begin{thebibliography}{10}

\bibitem{AlRo}
S.~Albeverio and M.~R\"{o}ckner.
\newblock Stochastic differential equations in infinite dimensions: solutions
  via {D}irichlet forms.
\newblock {\em Probab. Theory Related Fields}, 89(3):347--386, 1991.

\bibitem{AlKu1}
Sergio Albeverio and Seiichiro Kusuoka.
\newblock The invariant measure and the flow associated to the
  {$\Phi^4_3$}-quantum field model.
\newblock {\em Ann. Sc. Norm. Super. Pisa Cl. Sci. (5)}, 20(4):1359--1427,
  2020.

\bibitem{AlKu2}
Sergio Albeverio and Seiichiro Kusuoka.
\newblock Construction of a non-gaussian and rotation-invariant {$\Phi
  ^4$}-measure and associated flow on {${\mathbb R}^3$} through stochastic
  quantization, 2021.
\newblock arXiv:2102.08040.

\bibitem{BaHo1}
Ismael Bailleul and Masato Hoshino.
\newblock Paracontrolled calculus and regularity structures {I}.
\newblock {\em J. Math. Soc. Japan}, 73(2):553--595, 2021.

\bibitem{BaHo2}
Ismael Bailleul and Masato Hoshino.
\newblock Paracontrolled calculus and regularity structures {II}.
\newblock {\em J. \'{E}c. polytech. Math.}, 8:1275--1328, 2021.

\bibitem{Ba}
Richard~F. Bass.
\newblock A stochastic differential equation with a sticky point.
\newblock {\em Electron. J. Probab.}, 19, 2014.

\bibitem{BlGe}
R.~M. Blumenthal and R.~K. Getoor.
\newblock {\em Markov processes and potential theory}.
\newblock Pure and Applied Mathematics, Vol. 29. Academic Press, New
  York-London, 1968.

\bibitem{BCCH}
Y.~Bruned, A.~Chandra, I.~Chevyrev, and M.~Hairer.
\newblock Renormalising {SPDE}s in regularity structures.
\newblock {\em J. Eur. Math. Soc. (JEMS)}, 23(3):869--947, 2021.

\bibitem{CaCu}
R\'{e}mi Catellier and Khalil Chouk.
\newblock Paracontrolled distributions and the 3-dimensional stochastic
  quantization equation.
\newblock {\em Ann. Probab.}, 46(5):2621--2679, 2018.

\bibitem{DPDe}
Giuseppe Da~Prato and Arnaud Debussche.
\newblock Strong solutions to the stochastic qudpdeantization equations.
\newblock {\em Ann. Probab.}, 31(4):1900--1916, 2003.

\bibitem{El}
K.~D. Elworthy.
\newblock {\em Stochastic differential equations on manifolds}, volume~70 of
  {\em London Mathematical Society Lecture Note Series}.
\newblock Cambridge University Press, Cambridge-New York, 1982.

\bibitem{FlIsRu}
Franco Flandoli, Elena Issoglio, and Francesco Russo.
\newblock Multidimensional stochastic differential equations with
  distributional drift.
\newblock {\em Trans. Amer. Math. Soc.}, 369(3):1665--1688, 2017.

\bibitem{FlRuWo1}
Franco Flandoli, Francesco Russo, and Jochen Wolf.
\newblock Some {SDE}s with distributional drift. {I}. {G}eneral calculus.
\newblock {\em Osaka J. Math.}, 40(2):493--542, 2003.

\bibitem{FlRuWo2}
Franco Flandoli, Francesco Russo, and Jochen Wolf.
\newblock Some {SDE}s with distributional drift. {II}. {L}yons-{Z}heng
  structure, {I}t\^{o}'s formula and semimartingale characterization.
\newblock {\em Random Oper. Stochastic Equations}, 12(2):145--184, 2004.

\bibitem{FOT}
Masatoshi Fukushima, Yoichi Oshima, and Masayoshi Takeda.
\newblock {\em Dirichlet forms and symmetric {M}arkov processes}, volume~19 of
  {\em De Gruyter Studies in Mathematics}.
\newblock Walter de Gruyter \& Co., Berlin, extended edition, 2011.

\bibitem{GuHo1}
Massimiliano Gubinelli and Martina Hofmanov\'{a}.
\newblock Global solutions to elliptic and parabolic {$\Phi^4$} models in
  {E}uclidean space.
\newblock {\em Comm. Math. Phys.}, 368(3):1201--1266, 2019.

\bibitem{GuHo2}
Massimiliano Gubinelli and Martina Hofmanov\'{a}.
\newblock A {PDE} construction of the {E}uclidean {$\phi_3^4$} quantum field
  theory.
\newblock {\em Comm. Math. Phys.}, 384(1):1--75, 2021.

\bibitem{GIP}
Massimiliano Gubinelli, Peter Imkeller, and Nicolas Perkowski.
\newblock Paracontrolled distributions and singular {PDE}s.
\newblock {\em Forum Math. Pi}, 3:e6, 75, 2015.

\bibitem{GuPe}
Massimiliano Gubinelli and Nicolas Perkowski.
\newblock Probabilistic approach to the stochastic {B}urgers equation.
\newblock In {\em Stochastic partial differential equations and related
  fields}, volume 229 of {\em Springer Proc. Math. Stat.}, pages 515--527.
  Springer, Cham, 2018.

\bibitem{Ha1}
M.~Hairer.
\newblock A theory of regularity structures.
\newblock {\em Invent. Math.}, 198(2):269--504, 2014.

\bibitem{Ha2}
Martin Hairer.
\newblock Introduction to regularity structures.
\newblock {\em Braz. J. Probab. Stat.}, 29(2):175--210, 2015.

\bibitem{HKK2}
Masato Hoshino, Hiroshi Kawabi, and Seiichiro Kusuoka.
\newblock Stochastic quantization associated with the $\exp (\phi) _2$-quantum
  field model driven by space-time white noise on the torus in the full
  ${L}^1$-regime, 2020.

\bibitem{HKK1}
Masato Hoshino, Hiroshi Kawabi, and Seiichiro Kusuoka.
\newblock Stochastic quantization associated with the {$\exp(\Phi)_2$}-quantum
  field model driven by space-time white noise on the torus.
\newblock {\em J. Evol. Equ.}, 21(1):339--375, 2021.

\bibitem{IW}
Nobuyuki Ikeda and Shinzo Watanabe.
\newblock {\em Stochastic differential equations and diffusion processes},
  volume~24 of {\em North-Holland Mathematical Library}.
\newblock North-Holland Publishing Co., Amsterdam; Kodansha, Ltd., Tokyo,
  second edition, 1989.

\bibitem{MW3}
Jean-Christophe Mourrat and Hendrik Weber.
\newblock The dynamic {$\Phi^4_3$} model comes down from infinity.
\newblock {\em Comm. Math. Phys.}, 356(3):673--753, 2017.

\bibitem{MW2}
Jean-Christophe Mourrat and Hendrik Weber.
\newblock Global well-posedness of the dynamic {$\Phi^4$} model in the plane.
\newblock {\em Ann. Probab.}, 45(4):2398--2476, 2017.

\bibitem{RZZ}
Michael R\"{o}ckner, Rongchan Zhu, and Xiangchan Zhu.
\newblock Restricted {M}arkov uniqueness for the stochastic quantization of
  {$P(\Phi)_2$} and its applications.
\newblock {\em J. Funct. Anal.}, 272(10):4263--4303, 2017.

\bibitem{Slo}
Leszek S{\l}omi\'{n}ski.
\newblock Weak and strong approximations of reflected diffusions via
  penalization methods.
\newblock {\em Stochastic Process. Appl.}, 123(3):752--763, 2013.

\bibitem{StVa}
Daniel~W. Stroock and S.~R.~Srinivasa Varadhan.
\newblock {\em Multidimensional diffusion processes}.
\newblock Classics in Mathematics. Springer-Verlag, Berlin, 2006.
\newblock Reprint of the 1997 edition.

\bibitem{YW}
Toshio Yamada and Shinzo Watanabe.
\newblock On the uniqueness of solutions of stochastic differential equations.
\newblock {\em J. Math. Kyoto Univ.}, 11:155--167, 1971.

\end{thebibliography}

\end{document}